\documentclass[11pt]{amsart}
\usepackage{amsmath, amssymb, graphics, graphicx}
\usepackage[latin1]{inputenc}
\usepackage{xcolor}
\usepackage{latexsym}
\usepackage{graphics,epsfig}
\usepackage{amsfonts}
\usepackage[english]{babel}
\usepackage{verbatim}
\newtheorem{theorem}{Theorem}
\newtheorem{lemma}[theorem]{Lemma}
\newtheorem{proposition}[theorem]{Proposition}
\newtheorem{corollary}[theorem]{Corollary}%[section]

\theoremstyle{definition}
\newtheorem{definition}{Definition}
\theoremstyle{remark}
\newtheorem{remark}{Remark}
\newcommand{\CFG}{\text{CFG}}
\newcommand{\SG}{\mathcal{SG}}
\textheight=22cm \textwidth=15cm \voffset=-0.5cm \hoffset=-0.5cm
\begin{document}
\title[$G$-strongly positive scripts and critical configurations]
{$G$-strongly positive scripts and critical configurations of chip firing games on digraphs}
\author{TRAN Thi Thu Huong}
\email{ttthuong@math.ac.vn}
\address{Institute of Mathematics, Vietnam Academy of Science and Technology, 18 Hoang Quoc Viet, Cau Giay, Hanoi, Vietnam}
\keywords{critical configuration, super-stable configuration, duality, script algorithm, chip firing games}
\maketitle
\begin{abstract}
We show a collection of scripts, called $G$-strongly positive scripts, which is used to recognize critical configurations of a $\CFG$ (chip firing game) on a multi-digraph with a global sink. To decrease the time of the process of recognition caused by the stabilization we present an algorithm to find the minimum $G$-strongly positive script. From that we prove the  non-stability of configurations obtained from a critical configuration by firing inversely any non-empty multi-subset of vertices. This result is a generalization of a very recent one by Aval \emph{et.al} which is applied for $\CFG$ on undirected graphs. Last, we give a combinatorial proof for the duality between critical and super-stable configurations. 
\end{abstract}
\section{Introduction}
$\CFG$ (chip firing game) is a game on a (di)graph $G$ introduced by Bjorner, Lov\'asz and Shor \cite{BLS91}: each vertex contains some chips, and a move consists of selecting a vertex with at least chips as many as its (out-going) degree and firing it by sending one chip along each  out-going edge from it.  The game converges if there is no such vertex. They consider the convergence of the game and prove that the convergent configuration is independent on the moves.

The version of $\CFG$ on undirected graphs also  appears earlier under the name Abelian Sandpile Model in \cite{Dha90} to study the criticality of some self-organized systems in physics. Herein he considers a class of configurations of $\CFG$, called critical configurations, which are recurrent by the addition of chips and applying several steps of moves. He proves that only critical configurations have a non-zero probability of occurrence by the Markovian evolution. Furthermore, this non-zero is the same for all critical configurations. Since then, critical configurations have been shown of plenty interesting properties relating to a wide variety of known objects such as spanning trees, sandpile group, $G$-parking functions, Tutte polynomial \cite{CP05,Mer97,RC00}.  

$\CFG$ on directed graph is introduced systematically in \cite{BL92}. Its critical configurations are investigated with nice results concerning to rooted spanning trees, rotor router in \cite{HLMPPW08}. Much of theory developed for undirected graphs works for directed graphs but not always. For recognizing critical configurations on undirected graphs, Dhar introduces burning algorithm \cite{Dha90}. Then Speer  \cite{Spe93} developed it into script algorithm for the recognition problem on digraphs with a global sink and another strongly connected component. He also shows the minimum testing script and proves that all its parts are equal to $1$ for digraphs with no selfish site with the noticing that undirected graphs belong this digraphs. 

In this manuscript, we give some characterizations of critical configurations of $\CFG$ whose support graph is directed with a global sink. Section 2 presents some basic definitions and properties of critical configurations of $\CFG$. In Section 3, we show a collection of scripts, called $G$-strongly positive scripts, which are used to recognize critical configurations. To find the minimum $G$-strongly positive script $\sigma^M$, for reducing the stabilization process in the recognition, we introduce the strongly script algorithm which consists of running the script algorithm several times (with a determined order) on strongly connected components of $G$. Using this minimum script, we prove a characterization of critical configurations via the non-stability of configurations obtained from them by inversely firing any multi-subsets of vertices of $G$ as well as any multi-subsets of $\sigma^M$. This result generalizes a result in \cite{ADDL13} which is applied for undirected graphs. At the end of the section, we give the affirmative answer for a question raised in \cite{PP14} concerning the maximum of weight of critical configurations among stable configurations in its equivalent class.  Last, we present in Section 4 a combinatorial proof for the duality between critical and super-stable configurations based on the non-stability characterization of critical configurations given in Section 3. 
\section{Preliminaries}
Let $G=(V,E)$ be a directed multi-graph (digraphs) with $n+1$ vertices $V=\{1,2,\dots,n+1\}$. We assume that $G$ has a \emph{global sink} at vertex $n+1$, \emph{i.e.}, the out-going degree of $n+1$ is equal to $0$ and there exists a directed path from any vertex other than $n+1$ to $n+1$. The vertex $n+1$ is called the \emph{sink} of $G$. Denote $d^+_i$, $d^-_i$ and $e_{ij}$ the \emph{out-going degree, in-going degree} of $i$ and the \emph{number of edges from $i$ to $j$} respectively in $G$. The
\emph{Laplacian matrix} $\tilde{\Delta}$ of $G$ is an $(n+1)\times (n+1)$ matrix
defined as follows
\begin{equation*}
\tilde{\Delta}_{ij}=
\begin{cases}
d^+_i \quad &\text { if } i=j,\\
-e_{ij} \quad & \text{ if } i\ne j.
\end{cases}
\end{equation*}

The \emph{reduced Laplacian matrix} of $G$, denoted by $\Delta$, is the matrix obtained from $\tilde \Delta$ by deleting its row and column $n+1$. Let $\Delta_i$ be the $i$th row-vector of $\Delta$. The \emph{Picard group},
also called \emph{Sandpile group}, of $G$ is defined by $$\SG(G)=\mathbb Z^n/\langle\Delta_1,\dots,\Delta_n\rangle.$$ 
We recall some facts on Laplacian and its reduced matrices: 
\begin{itemize}
\item $\tilde \Delta$ is not invertible and $rank\ \tilde\Delta=n$.
\item The sum of row-vectors of $\tilde\Delta$ is vector zero.
\item $\Delta$ is invertible and all entries of $\Delta^{-1}$ are non-negative.
\item The sum of entries on each row of $\Delta$ is non-negative. 
\end{itemize}
\begin{definition}
Let $a,b\in \mathbb Z^n$. We said that $a$ is \emph{linear equivalent} to $b$, denoted by $a\sim b$, if they are in a same coset of $\SG(G)$. 
\end{definition}

Notice that since $\Delta$ is invertible, if $a\sim b$ then $b=a+\tau \Delta$ where $\tau=(b-a)\Delta^{-1}$ is determined unique. 

Now let $G$ be a digraph with a global sink. A $\CFG$ (\emph{chip firing game}) is defined on $G$, $G$ is call support of $CFG$. It includes the set of configurations and a firing rule transferring between configurations. \emph{A configuration} of a $\CFG$ on $G$ is a map  $$a: V\setminus n+1\to \mathbb Z$$ and $a(i)$ is the number of chips stored at vertex $i$. Sometimes we prefer to write a configuration on $G$ as an element of $\mathbb Z^n$ where part $i$ is equal to $a(i)$. Vertex $i$ of a configuration $a$ is called \emph{active} if $a(i)\geq d^+_i$. The firing rule is applied at an active vertex $i$ and it will pass $d^+_i$ chips stored at $i$ on its neighbors along its out-going edges (taking into account the multiple edges). As a sequence,  if we apply the firing rule at $i$, or say simple firing $i$, of $a$, chips on $a$ are redistributed to a new one $b$ such that
\begin{equation*}
b(j)=
\begin{cases}
a(j)-d^+_i &\text{ if } j=i,\\
a(j)+e_{ij} & \text { if  otherwise,}
\end{cases}
\end{equation*} 
or equivalently, 
$$b=a-\Delta_i,$$ where recall that $\Delta_i$ is the row $i$ of the reduced Laplacian matrix of $G$. 

A configuration $a$ is called \emph{non-negative} if $a(i)\geq 0$ for all $i=1,2,\dots, n$. Moreover, $a$ is called \emph{stable} if $a$ is non-negative and $a(i)$ does not exceed more than $d^+_i-1$ for $i=1,2,\dots,n$. The \emph{weight} of $a$, denoted by $w(a)$, is the sum of all parts of $a$.

From definition, sometimes chips at a vertex will be transferred to the sink and so the sink could contain some chips. However, in this manuscript when considering configurations we discard chips stored at sink. Furthermore, there is no any vertex of a stable configuration which is active. 
\begin{definition}
\begin{itemize}
\item[i)] A sequence of vertices $(s_1,\dots,s_k)$ of $V\setminus n+1$ is called a \emph{firing sequence} of $a$ if vertex $s_1$ of $a$ is active and after firing consecutively from $a$ vertices $s_1,s_2,\dots,s_i$ the vertex $s_{i+1}$ is active for each $i=2,\dots, k-1$. 
\item[ii)] A \emph{firing script} of a firing sequence $(s_1,s_2,\dots,s_k)$ of $\CFG$ on $G$ is a sequence of $n$ non-negative integers $\tau=(\tau_1,\tau_2,\dots,\tau_n)$, where $\tau_i$ is the number of occurrences vertex $i$ in the firing sequence $(s_1,s_2,\dots,s_k)$. 
\end{itemize}
\end{definition}

It is remarkable that if we apply the firing sequence $(s_1,s_2,\dots,s_k)$ on $a$ with the corresponding firing script $\tau$, we get $b=a-\tau\Delta$ and so $b$ is linear equivalent to $a$. The inverse does not hold in general. It is a fact that if $b\sim a$, then $b=a-\tau\Delta$ with $\tau=(a-b)\Delta^{-1}$ which is not necessary non-negative. Even $\tau$ is non-negative, it does not make sure that there exists a firing sequence from $a$ to $b$.

Notice that from a non-negative configuration $a$, we could apply several steps of firing rule on $a$ to get a stable configuration. The process of firing from $a$ to a stable configuration is called \emph{a stabilization} of $a$. 
\begin{lemma}[\cite{LP01}] Let $G$ be a digraph with a global sink and $a$ be a non-negative configuration. Then the system $\CFG(G)$ starting from $a$ converges to a unique stable configuration. Furthermore, if $s$ and $s^\prime$ are two different firing sequences in the stabilization of $a$, then $s$ and $s^\prime$ have the same firing script. 
\end{lemma}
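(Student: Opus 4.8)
The plan is to separate the statement into its two halves—termination at a stable configuration, and equality of the firing scripts—and to prove them in that order, because the quantitative bound extracted from the termination argument is precisely what makes the inductive exchange argument for the second half well-founded.

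First I would record the elementary invariant that keeps the whole argument inside the non-negative cone: along any legal firing sequence every intermediate configuration stays non-negative. Indeed, when an active vertex $i$ (so $a(i)\ge d^+_i$) is fired, its own value becomes $a(i)-d^+_i\ge 0$, while every other coordinate only increases by the non-negative amount $e_{ij}$. Starting from the non-negative $a$, non-negativity is therefore preserved step by step. Writing the current firing script as $\tau$, the configuration reached is $a-\tau\Delta$, so this invariant reads $a-\tau\Delta\ge 0$, i.e. $\tau\Delta\le a$ coordinatewise. For termination I would now exploit the two recalled facts that $\Delta$ is invertible and $\Delta^{-1}\ge 0$: multiplying $a-\tau\Delta\ge 0$ on the right by the non-negative matrix $\Delta^{-1}$ gives $a\Delta^{-1}-\tau\ge 0$, hence $\tau\le a\Delta^{-1}$ coordinatewise. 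Since $a\Delta^{-1}$ is a fixed vector, each coordinate of $\tau$, and hence the total number of firings $\sum_i\tau_i$, is bounded above by $\sum_i(a\Delta^{-1})_i$ independently of the sequence chosen. Thus every legal firing sequence is finite; a maximal one cannot fire any vertex, so its (non-negative) terminal configuration has no active vertex and is therefore stable. This proves convergence and, as a byproduct, yields an a priori bound on the length of any stabilization.

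The equality of scripts I would obtain by an exchange argument resting on one monotonicity lemma: if a vertex $t$ is active in a configuration $c$, then every legal sequence stabilizing $c$ fires $t$ at least once. The reason is that firing any vertex $u\ne t$ changes $c(t)$ only by the non-negative quantity $e_{ut}\ge 0$, so the value at $t$ never drops below $d^+_t$ until $t$ itself is fired; as the stable endpoint is not active at $t$, the vertex $t$ must occur. Given two legal stabilizing sequences $S$ and $T$ of $a$, let $t$ be the first vertex of $T$, which is active in $a$. The lemma forces $t$ to occur in $S$; preponing its first occurrence to the front of $S$ leaves a legal sequence, after which both sequences begin with $t$, reach the common configuration $a-\Delta_t$, and their tails are legal stabilizations of $a-\Delta_t$. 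Inducting on the total number of firings—finite by the bound above—yields that $S$ and $T$ have the same script $\tau$, whence both terminate at the single configuration $a-\tau\Delta$, giving uniqueness of the stable configuration as well.

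I expect the genuine obstacle to be the legality-preservation claim inside the exchange step. The useful observation is that a configuration depends only on the firing script and not on the order in which vertices are fired, so once the rearranged prefix $(t,s_1,\dots,s_{p-1})$ has been fired—where $s_p=t$ is the first occurrence of $t$ in $S$—it reaches exactly the configuration that the original prefix $(s_1,\dots,s_{p-1},t)$ reaches, and the untouched tail $(s_{p+1},\dots,s_k)$ remains legal automatically. Thus only the new prefix needs checking, and there the inequality $(a-\Delta_t)(j)\ge a(j)$ for every $j\ne t$ shows that each $s_j$ with $j<p$ is still active when its turn comes, since its chip count only went up relative to the original run. Making this bookkeeping precise—and verifying that it interacts correctly with $t$ having been removed from its old position—is the technical heart; everything else follows from the inequalities established above.
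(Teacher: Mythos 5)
Your proof is correct, but note there is nothing in the paper to compare it against: the lemma is quoted from \cite{LP01} and used as a black box, with no in-paper proof. Your argument is a correct, self-contained version of the standard one from the chip-firing literature. The termination half is particularly clean because it uses only facts the paper explicitly recalls ($\Delta$ invertible, $\Delta^{-1}$ entrywise non-negative): the invariant $a-\tau\Delta\succeq\vec 0$ along any legal sequence, multiplied on the right by $\Delta^{-1}$, gives the uniform bound $\tau\preceq a\Delta^{-1}$, so every legal sequence has bounded length and a maximal one ends at a stable configuration. The confluence half is the classical diamond/exchange argument for the abelian property, and your key inequality $(a-\Delta_t)(j)=a(j)+e_{tj}\geq a(j)$ for $j\neq t$ is exactly what justifies both the auxiliary lemma (an active vertex remains active until it is fired, hence must be fired) and the legality of moving the first occurrence of $t$ to the front of $S$; the fact that configurations depend only on the script then makes the untouched tail legal. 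One small point to tighten: ``inducting on the total number of firings'' should be phrased as induction on the length of one of the stabilizing sequences (say $|T|$) — after the exchange, both tails are stabilizations of $a-\Delta_t$ of length one less, so the induction closes, and it is the finiteness bound from the first half that makes this parameter well-defined. With that phrasing made explicit, the argument is complete.
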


Denote $a^o$ the unique stable configuration obtained from $a$ by firing vertices. By the lemma above although there may have many firing sequences in the stabilization, there is unique firing script. In this manuscript we also consider sequences whose part $i$ is the number of occurrences of vertex $i$ and so their parts are non-negative. These sequences are very likely to the firing scripts and they play an important role in our next  investigation on critical configurations of $\CFG$. We refer a sequence of non-negative integers for such purpose as a \emph{script} and as an \emph{$n$-script} if its length is $n$. It is readily that a firing script of a configuration is a script but as noticed above a script is not necessary a firing sequence.  

Before presenting the definition of critical configurations of $\CFG$ we need some notations and operations on $\mathbb Z^n$. Denote $\succeq$ the containment order in $\mathbb Z^n$, that means $a\succeq
b$ if $a_i\geq b_i$ for all $i=1,2,\dots,n$. The element $(0,0,\dots,0)\in \mathbb Z^n$ is denoted by $\vec 0$. The \emph{support} of an element $a\in \mathbb Z^n$, denoted by $supp(a)$, is defined by $$supp(a):=\{i:a_i\ne 0\}.$$  The addition of two sequences and the scalar product with a number $\delta\in \mathbb Z$ are implemented in $\mathbb Z^n$ as follows:
$$a+b=(a_1+b_1,\dots,a_n+b_n),$$ and $$\delta a=(\delta a_1,\delta a_2, \dots,\delta a_n).$$
For convenience of writing expressions, we also write $$a+\delta=(a_1+\delta, a_2+\delta, \dots,a_n+\delta).$$
Let $$a_{\max}=(d^+_1-1,d^+_2-1,\dots,d^+_n-1).$$ Then $a_{\max}$ is the maximum stable configuration of $\CFG(G)$ with the containment order. 
\begin{definition}
A configuration $a$ of $\CFG(G)$ is called \emph{critical} if it is stable and there exists a non-negative configuration $c$ such that $a=(a_{\max}+c)^o$.
\end{definition} 
  
We recall some facts on critical configurations:
\begin{lemma}[\cite{HLMPPW08}] \label{L:HLM1} Each equivalent class of $\mathcal{SG}(G)$ contains exactly one critical configuration. 
\end{lemma}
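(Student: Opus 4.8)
The plan is to prove the two halves of the statement separately: \emph{existence} of a critical configuration in each coset of $\SG(G)$, and \emph{uniqueness}. Throughout I will use the convergence lemma \cite{LP01} guaranteeing the well-defined stabilization $a\mapsto a^o$, together with its standard consequence, the \emph{abelian (staging) property}: for non-negative configurations $b,r$ one has $(b+r)^o=(b^o+r)^o$, since stabilizing $b$ first and then adding $r$ reaches the same stable configuration as stabilizing $b+r$ directly (both are legal stabilizations of $b+r$, and \cite{LP01} makes the limit unique).

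For \textbf{existence}, fix a coset and a representative $b$. The point is that the lattice $\langle\Delta_1,\dots,\Delta_n\rangle$ contains an entrywise strictly positive vector: since $\Delta$ is invertible with $\Delta^{-1}\succeq\vec 0$ and no zero column, the rational vector $(1,\dots,1)\Delta^{-1}$ is strictly positive, so for a large integer $M$ one can pick an integer vector $\tau\succeq\vec 0$ with $\tau\Delta$ strictly positive in every coordinate; this is exactly where the global-sink hypothesis (a directed path from every vertex to $n+1$) enters. Adding a suitable non-negative multiple of such $\tau\Delta$ to $b$ yields $b'\sim b$ with $b'\succeq a_{\max}$, that is $b'=a_{\max}+c$ with $c\succeq\vec 0$. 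Then $(b')^o=(a_{\max}+c)^o$ is stable, critical by definition, and $\sim b$, so the coset of $b$ contains a critical configuration.

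The serious part is \textbf{uniqueness}, and I would reduce it to a single recurrence statement: \emph{if $a$ is critical and $\rho\succeq\vec 0$ is an integer vector with $\rho\Delta\succeq\vec 0$, then $(a+\rho\Delta)^o=a$.} Granting this, let $a,a'$ be critical with $a\sim a'$ and write $a-a'=\hat\tau\Delta$ with $\hat\tau=\hat\tau^+-\hat\tau^-$, $\hat\tau^{\pm}\succeq\vec 0$ of disjoint support. Choosing $\ell=M\tau$ with $M$ large (for the strictly positive $\tau$ from the existence step), set $\rho=\hat\tau^-+\ell$ and $\rho'=\hat\tau^++\ell$; then $\rho,\rho'\succeq\vec 0$, and for $M$ large $\rho\Delta=\hat\tau^-\Delta+M\,\tau\Delta\succeq\vec 0$ and likewise $\rho'\Delta\succeq\vec 0$. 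Since $a-a'=(\hat\tau^+-\hat\tau^-)\Delta$ gives $\rho-\rho'=-\hat\tau$, one gets $a+\rho\Delta=a'+\rho'\Delta$, and applying the recurrence statement to both sides yields $a=(a+\rho\Delta)^o=(a'+\rho'\Delta)^o=a'$.

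I expect the recurrence statement to be the main obstacle. It cannot follow from the staging property alone: writing $a=(a_{\max}+c)^o$ and staging gives only $(a+\rho\Delta)^o=(a_{\max}+c+\rho\Delta)^o$, and adding a non-negative lattice vector does change the stabilization of a general configuration (for instance $(\rho\Delta)^o\neq\vec 0$ in general), so the \emph{criticality} of $a$ must be used in an essential way. Concretely I would try to show that the script $\rho$ is itself a \emph{legal} firing sequence from $a+\rho\Delta$ — so that $(a+\rho\Delta)-\rho\Delta=a$ is reached by legal firings and the uniqueness in \cite{LP01} forces $(a+\rho\Delta)^o=a$ — using that $a$ arises from the top configuration $a_{\max}+c$ via a least-action/monotonicity argument. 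The delicate feature here is that $G$ is a genuine digraph: unlike the undirected (or Eulerian) case, a single burning pass firing every vertex exactly once need not return $a$ to itself, so legality of the un-firing has to be extracted from the abelian dynamics rather than from one combinatorial sweep.
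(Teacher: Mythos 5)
Your architecture is sound, and most of it is correct. The existence half (build a strictly positive integer script from $\Delta^{-1}\succeq\vec 0$, push a coset representative above $a_{\max}$, stabilize) is complete. The reduction of uniqueness to the recurrence statement --- if $a$ is critical, $\rho\succeq\vec 0$ and $\rho\Delta\succeq\vec 0$, then $(a+\rho\Delta)^o=a$ --- is also correct: writing $a-a'=\hat\tau\Delta$, symmetrizing both sides with a large multiple of the positive script, and invoking uniqueness of stabilization does force $a=a'$. Note that the paper itself does not prove this lemma (it is quoted from Holroyd \emph{et al.}), so there is no proof of it here to compare against line by line; but the recurrence statement you isolate is precisely the forward direction of the paper's Theorem \ref{P:cri}, and this is where your proposal has a genuine gap: you state the recurrence property, correctly observe that it cannot follow from the abelian/staging property alone, and then only sketch a plan (``show that $\rho$ is a legal firing sequence from $a+\rho\Delta$ by a least-action argument'') without carrying it out. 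As submitted, the central step of uniqueness is unproven.

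Moreover, the route you sketch (establishing legality of un-firing directly) is the hard way around; the missing step has a short conservation proof, which is exactly how the paper proves Theorem \ref{P:cri}. First one shows $(a_{\max}+\rho\Delta)^o=a_{\max}$: if $\tau$ is the firing script of the stabilization of $a_{\max}+\rho\Delta$, a first-violation argument (the paper's Lemma \ref{L:prec}) gives $\tau\preceq\rho$; since chips leave the system only through the sink, the stabilization removes at most $\sum_i\tau_i e_{i(n+1)}\leq\sum_i\rho_i e_{i(n+1)}$ chips while the inverse firing $\rho\Delta$ added exactly $\sum_i\rho_i e_{i(n+1)}$ to the total weight, so $w\bigl((a_{\max}+\rho\Delta)^o\bigr)\geq w(a_{\max})$ (the paper's Lemma \ref{L:gre}); since every stable configuration is $\preceq a_{\max}$, this forces $(a_{\max}+\rho\Delta)^o=a_{\max}$. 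Then the staging property you already proved finishes the recurrence statement: with $a=(a_{\max}+c)^o$, $c\succeq\vec 0$,
$$(a+\rho\Delta)^o=(a_{\max}+c+\rho\Delta)^o=\bigl((a_{\max}+\rho\Delta)^o+c\bigr)^o=(a_{\max}+c)^o=a.$$
So your proof is completable with tools you already have on the table --- the bound $\tau\preceq\rho$, weight conservation through the sink, and staging --- but the submitted version is missing exactly this argument, and criticality of $a$ enters only through it.
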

\begin{lemma}[\cite{HLMPPW08}] \label{L:HLM2} Let $a$ be a configuration on $\CFG(G)$. The following statements are true:
\begin{itemize}
\item[i)] Let $b$ be a stable configuration of $\CFG$ and $b\succeq a$. If  $a$ is critical, then $b$ is critical too. 
\item[ii)] $a$ is critical if and only if for any configuration $b$ there exists a non-negative configuration $c$ such that $a=(b+c)^o$.
\item[iii)] The configuration $\big( a+(2a_{\max}+2)-(2a_{\max}+2)^o\big)^o$ is critical. Moreover, it is the unique critical configuration in the equivalent class of $a$.   
\end{itemize}
\end{lemma}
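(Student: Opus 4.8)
The plan is to reduce all three parts to three elementary tools about the stabilization operator $(\cdot)^o$, after which each statement follows quickly. The tools are: (a) the \emph{abelian identity} $(x+y)^o=(x^o+y)^o$ for non-negative $x,y$; (b) the observation that whenever a non-negative configuration $x$ dominates $a_{\max}$, i.e. $x\succeq a_{\max}$, its stabilization $x^o$ is automatically critical; and (c) the existence of a coordinatewise strictly positive vector inside the lattice $\Lambda=\{\tau\Delta:\tau\in\mathbb Z^n\}$. Tool (b) is immediate from the definition, since $x\succeq a_{\max}$ lets us write $x=a_{\max}+c$ with $c=x-a_{\max}\succeq\vec 0$, so $x^o=(a_{\max}+c)^o$ is critical. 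Tool (a) I would derive from Lemma \cite{LP01}: any legal firing sequence stabilizing $x$ remains legal after the extra non-negative chips $y$ are added (adding chips never deactivates a vertex), so from $x+y$ one first reaches $x^o+y$ by firing the stabilization script $\sigma_x$ of $x$ (the net effect is $x+y-\sigma_x\Delta=x^o+y$, with each intermediate firing legal by monotonicity) and then finishes; uniqueness of the stable result yields the identity. For tool (c) I would set $\tau_0=\mathrm{sign}(\det\Delta)\,\mathbf 1\,\mathrm{adj}(\Delta)\in\mathbb Z^n$, so that $v_0:=\tau_0\Delta=|\det\Delta|\,\mathbf 1\succ\vec 0$; hence $v_0\in\Lambda$ and $a+k\,v_0\sim a$ dominates any prescribed bound once $k$ is large.

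For part (i), given $a$ critical with $a=(a_{\max}+c)^o$ and a stable $b\succeq a$, I would write $b=a+d$ with $d=b-a\succeq\vec 0$ and compute, via the abelian identity,
$$(a_{\max}+c+d)^o=\bigl((a_{\max}+c)^o+d\bigr)^o=(a+d)^o=b^o=b,$$
using that $b$ is stable. Since $c+d\succeq\vec 0$, this exhibits $b=\bigl(a_{\max}+(c+d)\bigr)^o$, so $b$ is critical.

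For part (ii), the direction $(\Leftarrow)$ is immediate: specializing $b=a_{\max}$ produces $c\succeq\vec 0$ with $a=(a_{\max}+c)^o$, and a stabilization output is stable, so $a$ is critical by definition. For $(\Rightarrow)$ I would combine tools (b) and (c). Given an arbitrary $b$, choose $k$ large enough that $x:=a+k\,v_0$ satisfies both $x\succeq a_{\max}$ and $x\succeq b$ (possible since $v_0\succ\vec 0$); note $x\succeq a_{\max}\succeq\vec 0$, because every non-sink vertex reaches the sink and hence $d_i^+\ge 1$, so the stabilization is defined. Then $x\sim a$, and by tool (b) the configuration $x^o$ is critical; being linearly equivalent to $a$, Lemma~\ref{L:HLM1} forces $x^o=a$. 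Finally $c:=x-b\succeq\vec 0$ and $b+c=x$, whence $(b+c)^o=x^o=a$, as required.

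For part (iii), assuming as usual $a\succeq\vec 0$, set $\gamma=(2a_{\max}+2)^o$. Since $2a_{\max}+2\succeq a_{\max}$, tool (b) makes $\gamma$ the unique critical configuration of the class of $2a_{\max}+2$, and $2a_{\max}+2-\gamma=\sigma\Delta\in\Lambda$, where $\sigma$ is the stabilization script from Lemma~\cite{LP01}; hence $a+(2a_{\max}+2)-\gamma\sim a$. Because $\gamma$ is stable we have $\gamma\preceq a_{\max}$, so $2a_{\max}+2-\gamma\succeq a_{\max}+2$, and therefore $a+(2a_{\max}+2)-\gamma\succeq a+a_{\max}+2\succeq a_{\max}$ is non-negative and dominates $a_{\max}$. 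Tool (b) then makes its stabilization $a'$ critical, while $a'\sim a$; by Lemma~\ref{L:HLM1} $a'$ is the unique critical configuration in the class of $a$. I expect the main obstacle to be bookkeeping the linear-equivalence and domination conditions \emph{simultaneously} in parts (ii) and (iii): one must produce an equivalent representative that is at once above $a_{\max}$ (to force criticality through tool (b)) and comparable to the given configuration (to keep $c$ non-negative), which is precisely what the positive lattice vector $v_0$ and the corrective term $-(2a_{\max}+2)^o$ are engineered to supply.
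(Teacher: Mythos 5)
Your proposal is correct, but there is nothing in the paper to compare it against: Lemma~\ref{L:HLM2} is recalled from \cite{HLMPPW08} as a preliminary fact and the paper gives no proof of it, so what you have written is a self-contained argument filling that gap. It is worth noting that your three tools are precisely the devices the paper itself uses later for its new results: the abelian identity $(x+y)^o=(x^o+y)^o$ for $y\succeq\vec 0$ is invoked without comment in the proofs of Theorem~\ref{P:cri} and Theorem~\ref{T:max} (you actually derive it from the convergence lemma of \cite{LP01}, which is a nice touch); domination of $a_{\max}$ forcing criticality is just the definition; and your positive lattice vector $v_0=|\det\Delta|\,\mathbf 1$, obtained from the adjugate, plays the same role that large multiples of a $G$-(strongly) positive script play in the converse direction of Theorem~\ref{P:cri} -- you could equally have taken $v_0=\sigma\Delta$ for a $G$-positive script $\sigma$, whose existence the paper's Remark establishes from $\Delta^{-1}\succeq 0$. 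Two caveats deserve explicit mention. First, your appeal to Lemma~\ref{L:HLM1} in parts (ii) and (iii) is legitimate within this paper's architecture (it is an independently cited fact stated beforehand), but it means your argument is only as self-contained as that lemma. Second, your standing hypothesis $a\succeq\vec 0$ in part (iii) is genuinely needed, not cosmetic: for $a$ with entries below $-d_i^+-1$ the configuration $a+(2a_{\max}+2)-(2a_{\max}+2)^o$ need not be non-negative, and then its stabilization is not even covered by the convergence lemma, so the statement as printed (where a configuration may take arbitrary integer values) is implicitly restricted anyway; your reading matches the convention of \cite{HLMPPW08}, where chip configurations are non-negative, and every use the paper makes of part (iii) (e.g.\ computing the critical configuration equivalent to $a_{\max}-a$ for stable $a$) has non-negative input, so no generality that the paper relies on is lost.
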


Notice that there may have some equivalent definitions of critical configurations on digraphs. Holroyd \emph{et.al.} use the sufficient statement of Lemma \ref{L:HLM2}(ii) as a definition for the criticality \cite{HLMPPW08}. Lemma \ref{L:HLM2}(iii) could be used to find the critical configuration in the same equivalent class of $a$. However, this process could take time since the stabilization is exhausted. 
\section{$G$-strongly positive scripts and critical configurations}
In this section we introduce a collection of scripts which is used to recognize critical configurations for digraphs with a global sink. The class of these scripts is called $G$-strongly positive script $\sigma^M$. For reducing the process of stabilization in the recognition, we also present an algorithm to find the minimum $G$-strongly positive script. This algorithm is a concatenation of script algorithms \cite{Spe93} on strongly connected components of $G$. Using this script, we give a non-stability characterization of critical configurations when firing inversely multi-subsets of vertices of $G$ as well as of $\sigma^M$ (Theorem \ref{T:script}). This property will be used for the proof of the duality between critical and super-stable configurations combinatorially next section. We first present a property of firing scripts.
\begin{lemma}\label{L:prec}
Let $\sigma=(\sigma_1,\dots,\sigma_n)$ be an $n$-script. Let $a$ be a stable
configuration of $\CFG(G)$. Let $\tau$ be the firing script in the
stabilization process of $a+\sigma\Delta$. Then $\tau\preceq \sigma$.
\end{lemma}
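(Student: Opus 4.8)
The plan is to reformulate the statement as a monotonicity property of the stabilization and then prove it by an extremal ``first violation'' argument carried out along a single firing sequence. Writing $b_0 = a + \sigma\Delta$ and letting $\tau$ be its stabilization script, the stable configuration reached is $b_0^o = a + (\sigma - \tau)\Delta$, so the claim $\tau \preceq \sigma$ is equivalent to saying that the net script $\sigma - \tau$ stays non-negative in every coordinate. The configuration $b_0$ is exactly what one obtains from the stable configuration $a$ by reverse-firing each vertex $i$ a total of $\sigma_i$ times, and the assertion is that re-stabilizing never fires any vertex more than its reverse-firing budget $\sigma_i$. By the convergence lemma quoted above (\cite{LP01}) the firing script $\tau$ does not depend on the chosen firing sequence, so I am free to fix one convenient sequence $(s_1,\dots,s_k)$ realizing $\tau$ and reason about the intermediate configurations.

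Next, suppose for contradiction that $\tau \not\preceq \sigma$, i.e. some vertex is fired strictly more than $\sigma_i$ times during the stabilization. Walking along the fixed sequence, consider the first firing step at which some vertex $v$ is fired for the $(\sigma_v+1)$-th time. Let $\tau'$ be the script accumulated strictly before this step. By minimality of the step, no vertex has yet exceeded its budget, so $\tau' \preceq \sigma$; moreover $\tau'_v = \sigma_v$ because this step is $v$'s $(\sigma_v+1)$-th firing. The configuration present just before this step is $c' = b_0 - \tau'\Delta = a + (\sigma - \tau')\Delta$, and since we are about to fire $v$, the vertex $v$ must be active in $c'$, i.e. $c'_v \ge d^+_v$.

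Finally, I compute $c'_v$ and derive the contradiction from the sign pattern of the reduced Laplacian. We have $c'_v = a_v + \sum_{i=1}^n (\sigma_i - \tau'_i)\Delta_{iv}$. The diagonal term $(\sigma_v - \tau'_v)\Delta_{vv} = (\sigma_v - \tau'_v)d^+_v$ vanishes since $\sigma_v = \tau'_v$; each off-diagonal term equals $-(\sigma_i - \tau'_i)e_{iv}$ with $\sigma_i - \tau'_i \ge 0$ (from $\tau' \preceq \sigma$) and $e_{iv} \ge 0$, hence is $\le 0$. Therefore $c'_v \le a_v \le d^+_v - 1 < d^+_v$, contradicting that $v$ is active; this forces $\tau \preceq \sigma$. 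The crux of the argument---and the only place any real obstacle lies---is the bookkeeping in this last step: one must ensure simultaneously that the budget of the violating vertex is \emph{exactly} met (so that the positive diagonal contribution is annihilated) while all remaining vertices are still within budget (so that the off-diagonal contributions can only decrease the count), and it is precisely the choice of the \emph{first} violating firing that secures both conditions at once.
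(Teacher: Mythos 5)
Your proof is correct and follows essentially the same strategy as the paper's: assume $\tau\not\preceq\sigma$, locate the \emph{first} firing step at which some vertex exceeds its budget $\sigma_v$, and use the sign structure of $\Delta$ (non-positive off-diagonal entries) together with the stability of $a$ to derive a contradiction. The only cosmetic difference is that you examine the configuration just \emph{before} the violating firing and contradict activity of $v$, whereas the paper examines the configuration just \emph{after} it and contradicts non-negativity of the chip count; these are equivalent up to subtracting $d^+_v$.
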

\begin{proof}
Let $b=a+\sigma\Delta$. Since $a$ is stable, we have 
$$0\leq a_i=b_i-\sigma_i\Delta_{ii}+\sum_{j=1}^n\sigma_j\Delta_{ji}< \Delta_{ii}, \text{ for all } i=1,2\dots,n.$$ Let
$s=(s_1,s_2,\dots,s_k)$ be a firing sequence in the stabilization process of $b$. Notice that $s_i$ could repeat. Denote $s^{\leq t}=(s_1, s_2,\dots,s_t)$ the firing sequence of $s$ till the time $t$. Let $\tau^{\leq t}$ be the $n$-script associated to firing sequence $s^{\leq t}$, \emph{i.e.} $\tau^{\leq t}_j$ equal to the occurrences of vertex $j$ in the sequence $s^{\leq t}$ for $j=1,2,\dots,n$. Assume to the contrary that $\tau \npreceq \sigma$. Let $t_0$ be the first time in the processing of firing sequence $s$ such that the number of occurrences of vertex $s_{t_0}$ in the firing sequence $s^{\leq t_0}$ greater than $\sigma_{s_{t_0}}$. Let $i=s_{t_0}$ then we have 
\begin{equation*}
\begin{cases}
\tau^{\leq t_0}_j=\sigma_j+1 &\text{ if } j=i,\\
\tau^{\leq t_0}_j\leq \sigma_j &\text{ if } j\in \{1,2,\dots,n\} \text { and }j\ne i.
\end{cases}
\end{equation*} Therefore, after firing $s_{t_0}$ in the firing sequence $s$, we consider the chips at vertex $i$. Since $b$ is stable and $\Delta_{ij}<0$ for $i\ne j$, we have the following evaluation: 
$$b_i-\tau^{\leq t_0}_t\Delta_{ii}-\sum_{j\ne i} \tau^{\leq t_0}_j
\Delta_{ji}\leq b_i-(\sigma_i+1)\Delta_{ii}-\sum_{j\ne i}
\sigma_j\Delta_{ji}<0.$$ This is a contradiction to
the condition that $s$ is a firing sequence.
\end{proof}

Before presenting a criterion for the criticality of a configuration on digraph in Theorem \ref{P:cri} which is very similar to Dhar's algorithm, we need some definitions. 
\begin{definition}Let $G$ be a digraph with a global sink and reduced Laplacian matrix $\Delta$. Let $\sigma$ be an $n$-script. Then  
\begin{itemize}  
\item[i)] $\sigma$ is called a \emph{$G$-positive script} if $\sigma\Delta \succ \vec 0$
\item[ii)] $\sigma$ is called a \emph{$G$-strongly positive script} if $\sigma$ is $G$-positive script and the support of $\sigma\Delta$ intersects to each strongly connected component except for the sink component of $G$ non-empty.   
\end{itemize}
\end{definition}
\begin{remark}\begin{itemize}
\item[-] In case $G$ contains one strongly connected component different from the sink, then the properties of $G$-positive and $G$-strongly positive coincide. 
\item[-] A $G$-strongly positive script is a $G$-positive script but the inverse is not true. For instance, let $G$ be given as Figure \ref{F:critdigraph}. Then the reduced Laplacian matrix of $G$ is given by 
\begin{equation*}
\Delta= \left(
\begin{matrix}
7&-6&0\\
-1&4&-3\\
0&0&2
\end{matrix}
\right).
\end{equation*}
Hence, $G$ has three strongly connected components $\{v_1,v_2\}, \{v_3\},\{s\}$. We have $(1,2,3)\Delta=(5,2,0)$ and $(0,0,1)\Delta=(0,0,2)$. Hence, $(1,2,3)$ and $(0,0,1)$ are $G$-positive scripts. However, both $(1,2,3)$ and $(0,0,1)$ are not  $G$-strongly positive since the support of $(1,2,3)\Delta$ (resp. $(0,0,1)\Delta$) is $\{v_1,v_2\}$ (resp. $\{v_3\}$) which intersects to the strongly connected component $\{v_3\}$ (resp. $\{v_1,v_2\}$) empty.  
\item[-] The positive scripts and strongly positive scripts of a non-trivial graph with a global sink exist. Thus, let $\sigma=a\Delta^{-1}$. Therefore, we could choose many $a$ such that the parts of $\sigma$ are integer. Notice that $\Delta^{-1}$ is non-negative matrix. Hence, if $a$ is non-negative, then $\sigma$ is $G$-positive script. If all parts of $a$ are positive, then $\sigma$ is $G$-strongly positive.    
\end{itemize}
\end{remark}
\begin{figure}[h]
\centering
\includegraphics[scale=0.8]{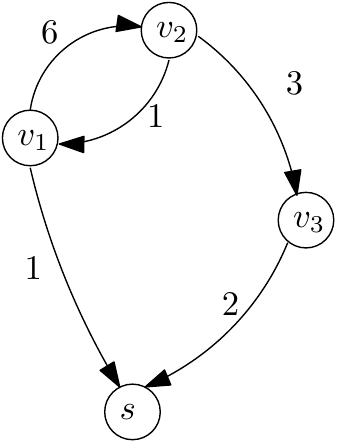}
\caption{A graph with global sink $s$}\label{F:critdigraph}
\end{figure}
The following lemma gives us an evaluation on the weight of a configuration when firing inversely it by a positive script. 
\begin{lemma}\label{L:gre} Let $\sigma$ be a $G$-positive script and let $a,b$ be non-negative configuration such that $b=(a+\sigma\Delta)^o$. Then $w(b)\geq w(a)$.
\end{lemma}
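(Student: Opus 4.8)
The plan is to read $b$ off as an explicit inverse-firing of $a$ and then to track the weight change coordinate by coordinate, through the sink. Let $\tau=(\tau_1,\dots,\tau_n)$ denote the firing script produced by the stabilization of $a+\sigma\Delta$ (which is well defined and unique by the convergence lemma, since $\sigma\Delta\succeq\vec 0$ forces $a+\sigma\Delta$ to be non-negative). The defining relation $b=(a+\sigma\Delta)^o$ then becomes
\[
b=a+\sigma\Delta-\tau\Delta=a+(\sigma-\tau)\Delta,
\]
so the entire statement reduces to controlling the sign of $w\big((\sigma-\tau)\Delta\big)$.

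First I would compute the weight of any configuration of the shape $\mu\Delta$ in terms of the row sums of $\Delta$. Summing over all coordinates and exchanging the order of summation gives
\[
w(\mu\Delta)=\sum_{j=1}^{n}\sum_{i=1}^{n}\mu_i\Delta_{ij}=\sum_{i=1}^{n}\mu_i R_i,
\qquad R_i:=\sum_{j=1}^{n}\Delta_{ij}.
\]
The point of this identity is twofold: $R_i$ is exactly the number of chips that a single firing of vertex $i$ loses to the sink, and one of the facts on $\Delta$ recalled in the preliminaries is precisely that every row sum $R_i$ is non-negative. Applying the identity with $\mu=\sigma-\tau$ yields
\[
w(b)-w(a)=w\big((\sigma-\tau)\Delta\big)=\sum_{i=1}^{n}(\sigma_i-\tau_i)\,R_i.
\]

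It then remains to show each summand is non-negative, for which $\tau\preceq\sigma$ together with $R_i\ge 0$ suffices. The inequality $\tau\preceq\sigma$ is exactly the content of Lemma~\ref{L:prec}: the stabilization of $a+\sigma\Delta$ never fires a vertex more often than the script $\sigma$ prescribes. I would therefore invoke Lemma~\ref{L:prec} to get $\sigma_i-\tau_i\ge 0$ for all $i$ and combine this with $R_i\ge 0$ to conclude $w(b)\ge w(a)$. The step I expect to be delicate is the legitimacy of this invocation: Lemma~\ref{L:prec} is phrased for a \emph{stable} starting configuration, its proof using stability of $a$ to derive a contradiction at the first moment a vertex would be over-fired. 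Thus the weight bookkeeping above runs verbatim once $a$ is stable, and the real work is to reduce a merely non-negative $a$ to that case — interchanging the inverse-firing by $\sigma$ with the stabilization of $a$ by exploiting $\sigma\Delta\succeq\vec 0$ and the confluence of stabilization — before Lemma~\ref{L:prec} can be brought to bear. Pinning down exactly this reduction, rather than the algebra of row sums, is where I expect the argument to be most sensitive.
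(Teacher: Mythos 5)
Your core computation is the paper's proof in different packaging: the row sum $R_i=\sum_{j=1}^{n}\Delta_{ij}$ is exactly $e_{i(n+1)}$, the number of edges from $i$ to the sink, so your identity $w(b)-w(a)=\sum_{i=1}^{n}(\sigma_i-\tau_i)R_i$ coincides with the paper's bookkeeping (chips injected by the inverse firing $\sigma\Delta$ minus chips lost to the sink during stabilization), and both arguments conclude by citing Lemma~\ref{L:prec}.

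The step you deferred, however, is not a delicate reduction waiting to be pinned down: it is impossible, because the statement with ``$a$ non-negative'' is false. Let $G$ have a single non-sink vertex $v_1$ with one edge to the sink, so $\Delta=(1)$, and let $\sigma=(1)$, which is $G$-positive (indeed $G$-strongly positive). For the non-negative configuration $a=(5)$ one has
$$b=(a+\sigma\Delta)^o=(6)^o=(0),$$
so $w(b)=0<5=w(a)$. The failure is exactly where you located it: Lemma~\ref{L:prec} genuinely needs $a$ stable (its proof rests on $0\leq a_i<\Delta_{ii}$), and for unstable $a$ the stabilization may fire vertices far more often than $\sigma$ prescribes, draining chips to the sink. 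Your proposed interchange only proves the weaker true statement $w(b)\geq w(a^o)$, since $b=(a^o+\sigma\Delta)^o$ by confluence; and $w(a^o)$ can be strictly smaller than $w(a)$, so no such reduction can close the gap. The honest repair is to strengthen the hypothesis to ``$a$ stable'', which is the only generality in which the paper ever applies the lemma (to $a_{\max}$ in Theorem~\ref{P:cri} and to the stable configurations $b^{(i)}$ in Theorem~\ref{T:max}); under that hypothesis your argument is complete and identical to the paper's. Note finally that the paper's own proof invokes Lemma~\ref{L:prec} with the same unstated mismatch, so what you have found is a flaw in the lemma's statement, not a defect peculiar to your approach.
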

\begin{proof}
Let $\tau$ be the firing script in the stabilization process of
$(a+\sigma\Delta)$. Recall that in the stabilization of a configuration, chips which are sent to the sink never come back. Hence, 
$$w(b)=w(a+\sigma\Delta)-\sum_{i: (i,n+1)\in E} \tau_i e_{i(n+1)}.$$ Furthermore, since $\sum_{i=1}^{n+1}\tilde{\Delta}_i=0,$ we have   
$$w(a+\sigma\Delta)=w(a)+\sum_{i:(i,n+1)\in E}\sigma_i e_{i(n+1)}.$$
By Lemma \ref{L:prec}, we have $\sigma_{i}\geq \tau_{i}.$ Hence, $w(b)\geq
w(a)$.
\end{proof}

The statement of the following result appeared in a different form in \cite{PPW11}. However, since we did not find any proofs for it in the literature, we restate it with the full proof as below.    
\begin{theorem}\label{P:cri}
Let $a$ be a stable configuration and $\sigma$ be a $G$-strongly positive script. Then $a$ is critical if and only if $(a+\sigma\Delta)^o=a$.
Moreover, if $a$ is critical then the firing script in the stabilization
process of $(a+\sigma\Delta)$ is $\sigma$.
\end{theorem}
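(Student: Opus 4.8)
The plan is to treat the two implications separately, after recording some preliminary facts used throughout. First I would note two easy reductions. (a) Strong positivity forces $\sigma\succ\vec 0$ componentwise: if $Z=\{i:\sigma_i=0\}$ were nonempty, then for $j\in Z$ the identity $(\sigma\Delta)_j=-\sum_{i\ne j}\sigma_i e_{ij}\le 0$ together with $\sigma\Delta\succeq\vec 0$ gives $(\sigma\Delta)_j=0$ and $\sigma_i e_{ij}=0$ for all $i$; hence no edge enters $Z$ from outside $Z$, so each strongly connected component lies entirely inside or outside $Z$, $Z$ is a union of non-sink components, and $supp(\sigma\Delta)\subseteq V\setminus Z$ misses them, contradicting strong positivity. (b) The abelian property $(x+y)^o=(x^o+y)^o$ for non-negative $x,y$, which follows from \cite{LP01}: a legal stabilizing sequence for $x$ stays legal after adding the extra chips $y$, and confluence forces the same stable result. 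Finally, once $(a+\sigma\Delta)^o=a$ is known the ``moreover'' is immediate, for if $\tau$ is the firing script then $a=a+(\sigma-\tau)\Delta$, so $(\sigma-\tau)\Delta=\vec 0$ and, $\Delta$ being invertible, $\tau=\sigma$.

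For the forward implication I would show that $b:=(a+\sigma\Delta)^o$ is critical and then invoke uniqueness. To see $b$ is critical I verify the criterion of Lemma~\ref{L:HLM2}(ii): given any configuration $d$, criticality of $a$ yields a non-negative $c$ with $d+c\succeq\vec 0$ and $(d+c)^o=a$; setting $e:=c+\sigma\Delta\succeq\vec 0$ and using the abelian property, $(d+e)^o=((d+c)^o+\sigma\Delta)^o=(a+\sigma\Delta)^o=b$. Hence $b$ is critical; since $b\sim a$ and each class contains a unique critical configuration (Lemma~\ref{L:HLM1}), $b=a$.

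For the converse I would start from $(a+\sigma\Delta)^o=a$, so by the reduction above the firing script is exactly $\sigma$ and, as $\sigma\succ\vec 0$, every vertex fires at least once in the stabilization of $a+\sigma\Delta$. Let $a^{*}$ be the critical representative of the class of $a$ (Lemma~\ref{L:HLM1}); the goal is $a=a^{*}$. Writing $a^{*}=(a+c)^o$ with $c\succeq\vec 0$ (Lemma~\ref{L:HLM2}(ii)) and noting $c\sim\vec 0$, one gets $c=\gamma\Delta$ with $\gamma=c\Delta^{-1}\succeq\vec 0$ because $\Delta^{-1}$ is non-negative; Lemma~\ref{L:prec} then gives that the firing script $u$ of $a+\gamma\Delta$ satisfies $u\preceq\gamma$, whence $a^{*}=a+v\Delta$ with $v:=\gamma-u\succeq\vec 0$. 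Thus the converse reduces to proving $v=\vec 0$. The intended mechanism is a pinch via Lemma~\ref{L:prec}: since $a+\sigma\Delta=a^{*}+(\sigma-v)\Delta$ and the firing script of $a+\sigma\Delta$ equals $\sigma$, applying Lemma~\ref{L:prec} to the stable configuration $a^{*}$ with the script $\sigma-v$ would force $\sigma\preceq\sigma-v$, i.e. $v\preceq\vec 0$, hence $v=\vec 0$ and $a=a^{*}$.

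The main obstacle is precisely this last step. Lemma~\ref{L:prec} applies only to genuine (non-negative) scripts, so the pinch needs $\sigma-v\succeq\vec 0$, and since $\Delta^{-1}\succeq 0$ a sufficient condition is $a^{*}-a\preceq\sigma\Delta$, that is $a^{*}\preceq a+\sigma\Delta$. This domination of the critical representative is not formal: it is the directed-graph analogue of the hard direction of Dhar's burning criterion, namely that full firing leaves no ``deficiency.'' I expect to obtain it by exploiting that the complete firing script $\sigma\succ\vec 0$ drives, inside each strongly connected component, a cascade reaching a state that dominates $a_{\max}$ there, with strong positivity guaranteeing the seed in every non-sink component. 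Once $a^{*}\preceq a+\sigma\Delta$ is established the pinch closes and the proof is complete.
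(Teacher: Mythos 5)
Your preliminaries, the ``moreover'' step, and the forward implication are all sound; indeed your forward argument (show $b=(a+\sigma\Delta)^o$ is itself critical via the criterion of Lemma \ref{L:HLM2}(ii), then invoke uniqueness from Lemma \ref{L:HLM1}) is a clean alternative to the paper's, which instead first proves $(a_{\max}+\sigma\Delta)^o=a_{\max}$ using the weight estimate of Lemma \ref{L:gre} and then stabilizes $a_{\max}+c+\sigma\Delta$ in two different orders. Your reduction (a), that strong positivity forces $\sigma\succeq(1,\dots,1)$, is also correct and matches the paper's Proposition. The genuine gap is in the converse, exactly where you flagged it: the pinch needs the script $\sigma-v$ to be non-negative, equivalently $a^{*}\preceq a+\sigma\Delta$, and this is never proved. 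Worse, the route you sketch for it (a legal cascade from $a+\sigma\Delta$ reaching a configuration dominating $a_{\max}$) fails in general for a \emph{single} application of $\sigma$. Concretely, in the paper's final example take $\sigma=\sigma^M=(1,3,3)$, so $\sigma\Delta=(2,0,1)$, and take the critical configuration $a=(2,0,1)$: then $a+\sigma\Delta=(4,0,2)$ has weight $6=w(a_{\max})$, firing never increases weight, and any configuration $c\succeq a_{\max}$ with $w(c)\leq 6$ would have to equal $a_{\max}=(4,1,1)$; but $a_{\max}$ is itself critical and lies in a different equivalence class than $a$ (Lemma \ref{L:HLM1}), so it is not reachable from $a+\sigma\Delta$. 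Hence nothing dominating $a_{\max}$ can be reached, and any correct argument must amplify the script rather than use $\sigma$ once.

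The fix is available inside your own framework: iterate. From $(a+\sigma\Delta)^o=a$ and the abelian property, induction gives $(a+m\sigma\Delta)^o=a$ for every $m\geq 1$, and invertibility of $\Delta$ shows the firing script of that stabilization is exactly $m\sigma$. Since $\sigma\succeq(1,\dots,1)$ by your reduction (a) and $v$ is a fixed non-negative integer vector, you may choose $m$ with $m\sigma\succeq v$; then $a+m\sigma\Delta=a^{*}+(m\sigma-v)\Delta$ with $m\sigma-v\succeq\vec 0$, Lemma \ref{L:prec} applied to the stable configuration $a^{*}$ bounds the firing script of $a+m\sigma\Delta$ by $m\sigma-v$, and uniqueness of that script forces $m\sigma\preceq m\sigma-v$, i.e.\ $v=\vec 0$ and $a=a^{*}$. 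This ``choose $m$ large'' amplification is precisely the device the paper uses in its converse (there one fires $a+m\sigma\Delta$ up to a configuration dominating $a_{\max}$ and invokes the definition of criticality); as written, without it, your proof of the converse is incomplete.
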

\begin{proof}
We first prove that $(a_{\max}+\sigma\Delta)^o=a_{\max}$. By Lemma
\ref{L:gre}, we have $w(a_{\max}+\sigma\Delta)^o\geq w(a_{\max})$. Since
$(a_{\max}+\sigma\Delta)^o$ is stable, we must have
$(a_{\max}+\sigma\Delta)^o\preceq a_{\max}$. Therefore,
$(a_{\max}+\sigma\Delta)^o= a_{\max}$. 
Now let $a$ be critical. By definition, there exists $c\succeq 0$ such that $a=(a_{\max}+c)^o$. We have
\begin{align*}
(a+\sigma\Delta)^o &=((a_{\max}+c)^o+\sigma\Delta)^o\\
&=(a_{\max}+c+\sigma\Delta)^o \quad \quad \text{(since } c\succeq \vec 0)\\
&=((a_{\max}+\sigma\Delta)^o+c)^o \quad\quad\text{(since } \sigma\Delta\succeq \vec 0)\\
&=(a_{\max}+c)^o\\
&=a.
\end{align*}

Conversely, assume that $(a+\sigma\Delta)^o=a$. Since $\sigma$ is $G$-strongly positive, the support of $\sigma\Delta$ on each strongly connected component are non-zero. There exists a large enough $m$ such that after
applying several firings on $(a+m\sigma\Delta)$ we could obtain a
configuration greater (with respect to the containment order) than $a_{\max}$.
This means there exists $c\succ 0$ and a firing sequence of firing script $k$ such that starting from
$a+m\sigma\Delta$, after firing by the firing script $k$ we get 
$$a_{\max}+c=(a+m\sigma\Delta)-k\Delta.$$ 
Since $(a+\sigma\Delta)^o=a$ and $\sigma\Delta \succeq \vec 0$, we have 
$$(a+2\sigma\Delta)^o=(a+\sigma\Delta+\sigma\Delta)^o=((a+\sigma\Delta)^o+\sigma\Delta)^o=(a+\sigma\Delta)^o=a.$$
Therefore, 
$$a=(a+\sigma\Delta)^o=\big(a+m\sigma\Delta\big)^o =\big((a+m\sigma\Delta)-k\Delta\big)^o=(a_{\max}+c)^o.$$
So $a$ is critical.

Last, let $\tau$ be the firing script in the stabilization process of $a+\sigma\Delta$. Since $a$ is critical, we have 
$(a+\sigma\Delta)^o=a$. Hence, $a+\sigma\Delta-\tau\Delta=a$ and so $(\sigma-\tau)\Delta=0$. Since $\Delta$ is invertible, we have $\sigma=\tau$.
\end{proof}

It remarks that we cannot replace the condition $G$-strongly positive by the $G$-positive of $\sigma$  in Theorem \ref{P:cri}. For instance, consider the graph given in Figure \ref{F:critdigraph}. Let $\sigma=(0,0,1)$ and $\sigma^\prime=(1,2,4)$. We have $\sigma \Delta=(0,0,2)$ and $\sigma^\prime \Delta=(5,2,2)$. Hence,  $\sigma$ is $G$-positive but not $G$-strongly positive and $\sigma^\prime$ is $G$-strongly positive. Let $a=(1,1,1)$ be a configuration of $\CFG$ on that graph. We have $(a+\sigma^\prime\Delta)^o=(6,3,3)$ and by Theorem \ref{P:cri}, $a$ is not critical. However, if we replace $\sigma^\prime$ by $\sigma$, then $(a+\sigma\Delta)^o=(1,1,1)=a$ which is recurrent. 

Recall that Speer \cite{Spe93} gave an algorithm, called \emph{script algorithm}, to find the minimum $G$-positive script in case $G$ has only one strongly connected component different from the sink. From that an algorithm like "burning algorithm" \cite{Dha90} for testing the criticality of a configuration was presented. Moreover, for general $G$ (with the global sink), he showed that each critical configuration of $G$ is a concatenation of critical configurations of some certain digraphs with a global sink and one strongly connected component different from the sink, which are induced by the strongly connected components of $G$. However, he did not show explicit the minimum testing script. In fact, this testing script is not a concatenation of the minimum scripts on each its induced component. The reason for this un-wanted fact is that the firing caused by the minimum script on an induced component affects to the firing on another induced component. To improve this we use the (modifying) script algorithm for each induced component with a given order (will be defined later) such that the inverse firing of the latter does not affect to the positiveness of the former ones. We first modify the script algorithm for finding the minimum script $\sigma$ such that $\sigma\Delta\succ a$ with a given non-negative configuration $a$ on a digraph having one strongly connected component different from the sink. The algorithm, called \emph{$a$-script algorithm}, is as follows: We construct a sequence of increasing scripts $\{\sigma^{k}_a\}_k$ by recurrence and will prove it terminates at step $M$ to obtain the  script $\sigma^M_a$ (which will be the minimum script). We start  from $\sigma^1_a=(1,0,\dots,0)$. Assume that we had $\sigma^{k-1}_a$. At step $k$, if there exists a vertex $i_k$ such that $(\sigma^{k-1}_a\Delta)_{i_k} < a_{i_k}$, then we increase part $i_k$ of $\sigma^{k-1}_a$ by one. Hence, $\sigma^{k}_a=\sigma^{k-1}_a+e_{i_k}$, where $e_{i_k}$ denotes the unit vector at the $i_k$th coordinate. The following lemma shows the existence and uniqueness of $\sigma^M_a$. 
\begin{lemma}\label{L:scr} Let $G$ be a digraph with a global sink. Assume that $G$ contains only one strongly connected component except for the sink. Let $a$ be non-negative configuration of $\CFG$ on $G$. The following statements hold: 
\begin{itemize}
\item[i)] The $a$-script algorithm above terminates and results the unique script $\sigma^M_a$;
\item[ii)] $\sigma^M_a\succeq (1,1,\dots,1)$ and $(\sigma^M_a\Delta)_i \leq a_i+d^+_i$ for all $i=1,2,\dots,n$. Moreover, the equality of the latter happens if and only if $n=1$; 
\item[iii)] If $\sigma$ is a script such that $\sigma\Delta\succ a$, then $\sigma \succeq \sigma^M_a$. 
\end{itemize}
\end{lemma}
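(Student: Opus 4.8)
The plan is to treat the $a$-script algorithm as a monotone ``reverse firing'' process and to drive all three parts from a single invariant together with the $M$-matrix structure of $\Delta$ (positive diagonal $\Delta_{ii}=d^+_i$, non-positive off-diagonal $\Delta_{ji}=-e_{ji}$, non-negative row sums, and $\Delta^{-1}\succeq\vec 0$ entrywise). The engine is the claim that $(\sigma^{k}_a\Delta)_i\le a_i+d^+_i$ for all $i$ and all $k$, which I would prove by induction on $k$. It holds for the seed $\sigma^1_a=e_1$, since $(\sigma^1_a\Delta)_1=d^+_1\le a_1+d^+_1$ and $(\sigma^1_a\Delta)_j=-e_{1j}\le 0\le a_j+d^+_j$ for $j\ne 1$. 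At a step that increments $i_k$ (where $(\sigma^{k-1}_a\Delta)_{i_k}<a_{i_k}$), the $i_k$-th entry rises by exactly $d^+_{i_k}$ to a value $<a_{i_k}+d^+_{i_k}$, while every other entry changes by $\Delta_{i_k j}=-e_{i_k j}\le 0$ and hence only decreases. This already establishes the upper bound asserted in (ii).

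For termination in (i), I would multiply the invariant on the right by $\Delta^{-1}$. Writing $u$ for the vector with $u_i=a_i+d^+_i$, and using that $\Delta^{-1}\succeq\vec 0$ preserves the containment order, I get $\sigma^{k}_a=(\sigma^{k}_a\Delta)\Delta^{-1}\preceq u\Delta^{-1}$, a fixed upper bound independent of $k$. Since $\{\sigma^{k}_a\}$ is coordinatewise nondecreasing and gains one unit of total weight at each step, this boundedness forces the algorithm to halt after finitely many steps at some $\sigma^M_a$ with $\sigma^M_a\Delta\succeq a$. For uniqueness (independence from the choices of $i_k$), I would invoke part (iii): applying the domination statement of (iii) to the output of one run against the sequence produced by a second run, and vice versa, gives mutual domination, hence equality of the two outputs.

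For (iii), let $\sigma$ be any script with $\sigma\Delta\succ a$; note $\sigma\ne\vec 0$ since $a\succeq\vec 0$. I would show $\sigma\succeq\sigma^{k}_a$ for every $k$ by induction along a fixed run. For the base case $\sigma\succeq e_1$ I use strong connectivity of the non-sink part: if $\mathrm{supp}(\sigma)=P$ were a proper nonempty subset of $\{1,\dots,n\}$, some edge would leave $P$, say $e_{ij}>0$ with $i\in P$ and $j\notin P$, forcing $(\sigma\Delta)_j\le -\sigma_i e_{ij}<0\le a_j$, a contradiction; hence $\sigma\succeq(1,\dots,1)\succeq e_1$. For the inductive step, suppose $\sigma\succeq\sigma^{k-1}_a$ but $\sigma_{i_k}=\sigma^{k-1}_{a,i_k}$ at the increment; then, using $\sigma_l\ge\sigma^{k-1}_{a,l}$ and $e_{l i_k}\ge 0$ for $l\ne i_k$,
$$(\sigma\Delta)_{i_k}=\sigma_{i_k}d^+_{i_k}-\sum_{l\ne i_k}\sigma_l e_{l i_k}\le (\sigma^{k-1}_a\Delta)_{i_k}<a_{i_k},$$
contradicting $\sigma\Delta\succeq a$; thus $\sigma_{i_k}\ge\sigma^{k-1}_{a,i_k}+1$ and $\sigma\succeq\sigma^{k}_a$. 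Taking $k=M$ yields $\sigma\succeq\sigma^M_a$. Applying the same support argument to $\sigma^M_a$ itself at termination (where $\sigma^M_a\Delta\succeq a\succeq\vec 0$) gives $\sigma^M_a\succeq(1,\dots,1)$, settling that half of (ii).

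For the equality clause of (ii) I would show that for $n\ge 2$ the bound is always strict: every coordinate is incremented at least once, so right after the last increment of $i$ its value is $\le a_i+d^+_i-1$ and thereafter only decreases, while a coordinate left at its seed value with $a_i=0$ is handled because strong connectivity supplies an incremented in-neighbour that strictly lowers it. For $n=1$ the algorithm merely pushes $\sigma_1 d^+_1$ up to the first value $\ge a_1$, and choosing $a_1=0$ gives $(\sigma^M_a\Delta)_1=d^+_1=a_1+d^+_1$, so equality is attainable precisely when $n=1$. The step I expect to be the main obstacle is termination together with its order-independence: because incrementing one coordinate can drive a previously satisfied coordinate back below $a$, one cannot argue by a monotonically growing set of ``satisfied'' vertices. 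The resolution is exactly the a priori bound $\sigma^k_a\preceq u\Delta^{-1}$ extracted from the invariant, which halts the process, combined with the domination property of (iii), which forces every run to land on the same minimal script.
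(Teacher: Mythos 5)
Your proof is correct, but it reaches part (i) by a genuinely different route than the paper. The paper (deferring to Speer's Lemma 7) sketches (i) via the \emph{diamond property} of the increment process --- a local-confluence argument in the style of the abelian property --- whereas you obtain termination from an a priori bound: the invariant $(\sigma^k_a\Delta)_i\le a_i+d^+_i$, pushed through the entrywise non-negativity of $\Delta^{-1}$, gives the fixed cap $\sigma^k_a\preceq u\Delta^{-1}$ with $u_i=a_i+d^+_i$, and uniqueness then falls out of the minimality statement (iii) applied mutually to the outputs of two runs. This buys two things the paper's sketch does not make explicit: first, local confluence by itself does not yield termination (Newman's lemma takes termination as a hypothesis), so your bound supplies an argument the sketch leaves implicit; second, the same invariant simultaneously delivers the upper bound in (ii), so one computation serves both parts. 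Your treatments of the lower bound in (ii) (the support argument via strong connectivity of the non-sink component) and of (iii) (induction along a fixed run, deriving a contradiction at the first coordinate where domination would fail) coincide with the paper's sketch, only with the details written out; your case analysis for the equality clause of (ii) is in fact more careful than the paper's ``clear from the construction''. One fine point you should make explicit: when you invoke (iii) for uniqueness, the output of the second run is only guaranteed to satisfy $\sigma\Delta\succeq a$ (exact equality is possible, e.g.\ $n=1$ and $a_1=d^+_1$), not the strict hypothesis $\sigma\Delta\succ a$ in the statement of (iii); this is harmless because your proof of (iii) only uses $\sigma\ne\vec 0$ and $\sigma\Delta\succeq a$, but the uniqueness step should cite that weak-hypothesis version of the argument rather than the literal statement of (iii).
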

\begin{proof} The proof of this lemma is very similar to the one presented in \cite{Spe93}(Lemma 7) which is equivalent to the case $a=\vec 0$. We represent shortly the proof as follows: 
\begin{itemize}
\item[i)] We use the \emph{diamond property} of the sequence $\{\sigma^i_a: i=1,2,\dots,M\}$ for $a\succeq \vec 0$: The increasing part $i$ at each step by the recurrence in the $a$-script algorithm does not decreasing the ability of the "having to increase" the part $j$ at next steps. This fact will imply the termination and uniqueness of $\sigma^M_a$. 
\item[ii)] The increasing a part $i$ of a script in the construction absorbs chips of vertices going-out from $i$: The strongly connectivity of $[V\setminus s]$ and the non-negativity of all parts of $a$ lead to $\sigma^M_a\succeq (1,1,\dots,1)$. The rest of $(ii)$ is clear because of the construction of $\sigma^M_a$. 
\item[iii)] By induction on $i$ that $\sigma\succeq \sigma^i_a$ for all $i=1,2,\dots,M$. 
\end{itemize}  
\end{proof}

By Theorem \ref{P:cri}, we will find the minimum $G$-strongly positive script for general digraphs with a global sink. Before presenting the algorithm, we need some definitions and notations. 

Assume that $$V=V_1\sqcup V_2\cdots\sqcup V_k\sqcup \{n+1\},$$ where $V_i$ are strongly connected components of $G$. We say that component $V_i$ is \emph{at level $0$} if there is no edge of $G$ to $V_i$ from out side  $V_i$. By recurrence, component $V_i$ is \emph{at level $\ell$} if there is at least one edge of $G$ from a component at level $\ell-1$ to $V_i$ and there is no edge of $G$ to $V_i$ from components out side components at level $0,1,2,\dots,\ell-1$. 
Let $G[V_i]$ be the extended graph defined as follows: The vertices are $V_i\cup s_i$ where $s_i$ is a new vertex which will be the global sink of $G[V_i]$; The edges are edges of $G$ with endpoints in $V_i$ and all edges $(v,s_i)$ (including the multiple edges) for each $(v,u)$ is an edge from $v$ to a vertex $u$ out side $V_i$. It is easy to see that $G[V_i]$ has a global sink $s_i$. To  find the minimum $G$-strongly positive script for general $G$, we implement the algorithm, called \emph{strongly script algorithm}, as follows:
\begin{itemize}
\item Step 1: Find all the minimum $G[V_i]$-strongly positive script $\sigma^{M_i}_{\vec 0}(G[V_i])$ of each strongly connected components $V_i$ at level $0$ of $G$. 
\item Step 2: Update the last configuration, called $a$, by firing inversely from configuration $\vec 0$ all vertices of $V_i$ by the script $\sigma^{M_i}_{\vec 0}(G[V_i])$ for all $V_i$ at level 0. 
\item Step 3: Running the $a|V_i$-script algorithms on the graph $G[V_i]$ to find $\sigma^{M_i}_{a|V_i}(G[V_i])$ for all $V_i$ at level $1$ with $a|V_i$ denoted the configuration $-a$ restricted on component $V_i$.
\item Step 4: Update the last configuration by firing inversely from $a$ all vertices of $V_i$ by the script $\sigma^{M_i}_{a|V_i}$ for all $V_i$ at level 1. 
\item Continuing the step 3 and 4 sequentially for components at level 2,3,... until all strongly connected components are well fired inversely. The script obtained by assembling all the scripts of all connected components found above. This script is denoted by $\sigma^M$.  
\end{itemize}
 
The following facts on the minimum of the script $\sigma^M$ are implied easily from Lemma \ref{L:scr}:
\begin{proposition} The script $\sigma^M$ satisfies the followings: 
\begin{itemize}
\item[i)] $\sigma^M$ is the minimum $G$-strongly positive script of $G$; 
\item[ii)] $\sigma^M\succeq (1,1,\dots,1)$ and $(\sigma^M\Delta)_i\leq d^+_i$ for all $i=1,2,\dots,n$. Moreover, $(\sigma^M\Delta)_i=d^+_i$ if and only if $i$ is a source of $G$, that means $d^-_i=0$. 
\end{itemize}
\end{proposition}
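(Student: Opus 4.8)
The plan is to lift the three parts of Lemma~\ref{L:scr} from the extended graphs $G[V_i]$ to $G$, exploiting the block structure that the level decomposition forces on $\Delta$. Everything rests on one identity. Because every edge joining two distinct components runs from a lower level to a strictly higher one, the diagonal block of $\Delta$ indexed by $V_i$ coincides with the reduced Laplacian $\Delta^{(i)}$ of $G[V_i]$, and for $w\in V_i$ one has
$$(\sigma^M\Delta)_w=(\sigma^{M_i}\Delta^{(i)})_w-c_w,\qquad c_w:=\sum_{v\notin V_i}\sigma^M_v\,e_{vw}\ge 0,$$
where $c_w$ counts the chips poured into $w$ when all strictly lower components are fired inversely. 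By the very construction of the strongly script algorithm this $c_w$ is the target $(a|V_i)_w$ fed to the $(a|V_i)$-script algorithm on $G[V_i]$, so Lemma~\ref{L:scr} applies on each block with target $c$ restricted to $V_i$.

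For (i) I would separate positivity from minimality. Non-negativity $(\sigma^M\Delta)_w\ge 0$ is immediate from the identity and the algorithmic guarantee $(\sigma^{M_i}\Delta^{(i)})_w\ge c_w$, while $supp(\sigma^M\Delta)$ meets every non-sink component because on each $V_i$ the script is chosen with $\sigma^{M_i}\Delta^{(i)}\succ c$, hence strict at some vertex, producing a $w\in V_i$ with $(\sigma^M\Delta)_w>0$. Minimality I would prove by induction on the level. Let $\sigma$ be any $G$-strongly positive script, let $V_i$ be at level $\ell$, and set $c'_w=\sum_{v\notin V_i}\sigma_v e_{vw}$; restricting $\sigma\Delta\succeq\vec 0$ to $V_i$ gives $(\sigma^{(i)}\Delta^{(i)})_w=(\sigma\Delta)_w+c'_w\ge c'_w$. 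The inductive hypothesis $\sigma\succeq\sigma^M$ on lower levels yields $c'_w\ge c_w$, and strong positivity supplies a strictly positive entry on $V_i$, so $\sigma^{(i)}\Delta^{(i)}\succ c$. Lemma~\ref{L:scr}(iii) then gives $\sigma^{(i)}\succeq\sigma^{M_i}$, and assembling over all components yields $\sigma\succeq\sigma^M$, as required.

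Part (ii) is then quick. The inequality $\sigma^M\succeq(1,\dots,1)$ is inherited blockwise from Lemma~\ref{L:scr}(ii). For the upper estimate the identity does the work: Lemma~\ref{L:scr}(ii) bounds $(\sigma^{M_i}\Delta^{(i)})_w\le c_w+d^+_w$, so subtracting $c_w$ leaves $(\sigma^M\Delta)_w\le d^+_w$. The easy half of the equality case is ``source $\Rightarrow$ equality'': if $d^-_w=0$ then $w$ lies in no cycle and receives nothing from below, so $\{w\}$ is a level-$0$ singleton with $c_w=0$; the $\vec 0$-script algorithm on a lone vertex returns $\sigma^{M_i}_w=1$, whence $(\sigma^M\Delta)_w=d^+_w$.

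The converse, equality $\Rightarrow$ source, is the step I expect to be the main obstacle, and it is the genuine content of (ii). Lemma~\ref{L:scr}(ii) tells us that $(\sigma^{M_i}\Delta^{(i)})_w=c_w+d^+_w$ can occur only when $V_i$ is a singleton, so one immediately concludes that $w$ is its own strongly connected component; what remains is to upgrade ``singleton'' to $d^-_w=0$, that is, to exclude a lone vertex still receiving inflow from below. On a single vertex $\sigma^{M_i}_w$ is the least integer with $\sigma^{M_i}_w\,d^+_w>c_w$, so $(\sigma^M\Delta)_w=\sigma^{M_i}_w\,d^+_w-c_w$ is exactly the amount by which $c_w$ falls short of the next multiple of $d^+_w$. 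The crux is therefore arithmetic: one must show that for a genuine level-$\ge 1$ singleton the inflow $c_w$ is never an exact multiple of $d^+_w$, so that this shortfall is strictly positive and the bound $(\sigma^M\Delta)_w<d^+_w$ is strict. Establishing $d^+_w\nmid c_w$ for such a vertex—by tracking how the lower-level scripts spread chips along the edges feeding $w$—is the point on which the ``only if'' direction turns.
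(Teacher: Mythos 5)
Your block-decomposition identity $(\sigma^M\Delta)_w=(\sigma^{M_i}\Delta^{(i)})_w-c_w$ is correct, and it is the right skeleton: the paper itself offers no argument beyond the phrase ``implied easily from Lemma \ref{L:scr}'', so your expansion (upper bound in (ii), the lower-bound induction over levels, and source $\Rightarrow$ equality) supplies details the paper omits, and those parts are sound. However, the step you yourself flag as the crux is a genuine gap, and it cannot be closed, because the arithmetic fact you reduce it to is false. Take $V=\{v_1,v_2,s\}$ with edges $(v_1,v_2),(v_1,s),(v_2,s)$, so that
\begin{equation*}
\Delta=\begin{pmatrix} 2 & -1\\ 0 & 1\end{pmatrix},
\end{equation*}
with strongly connected components $\{v_1\}$ at level $0$ and $\{v_2\}$ at level $1$. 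The level-$0$ script is $(1)$, so the inflow into the level-$1$ singleton is $c_{v_2}=1=d^+_{v_2}$: a genuine level-$\ge 1$ singleton whose inflow \emph{is} an exact multiple of its out-degree, contradicting the divisibility claim $d^+_w\nmid c_w$ on which your ``only if'' direction turns.

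Worse, this example shows the difficulty is not yours but the Proposition's, under either reading of the singleton rule. With your strict rule ($\sigma^{M_i}_w$ the least integer with $\sigma^{M_i}_w d^+_w>c_w$) one gets $\sigma^M=(1,2)$ and $\sigma^M\Delta=(2,1)$, so $(\sigma^M\Delta)_{v_2}=d^+_{v_2}$ although $d^-_{v_2}=1\neq 0$: the equality-$\Rightarrow$-source half of (ii) is simply false, not merely unproven. With the paper's literal incremental rule (increase a part only while $(\sigma\Delta)_i<a_i$) one gets $\sigma^M=(1,1)$ and $\sigma^M\Delta=(2,0)$, whose support misses $\{v_2\}$, so $\sigma^M$ is not $G$-strongly positive and (i) fails; this also invalidates the unjustified step in your part (i), where you assert that the algorithm guarantees $\sigma^{M_i}\Delta^{(i)}\succ c$ ``hence strict at some vertex''---the termination condition only yields $\succeq$, and strictness is exactly what fails here (the same phenomenon occurs for non-singleton components, e.g.\ when the inflow $c$ lies in the image of $\Delta^{(i)}$ applied to a non-negative script). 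So no disambiguation of the strongly script algorithm makes both (i) and (ii) true simultaneously; the gap you located is a real defect that the paper's one-line proof glosses over, and an honest write-up would either restrict to graphs where the divisibility obstruction cannot occur or amend the statement of (ii).
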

We have shown a way to determine the minimum $G$-strongly positive script by inversely firing strongly connected components in an order from components of low levels to high levels. Components are fired (inversely) first like the argents causing the consecutive firings of components of higher levels. Comparing to the Dhar's algorithm when the minimum script is $1$ at every parts, firing the sink first in Dhar's algorithm is equivalent to firing inversely all vertices and each vertices once. 

\begin{figure}[h] \label{F:enc}
\begin{center}
\includegraphics{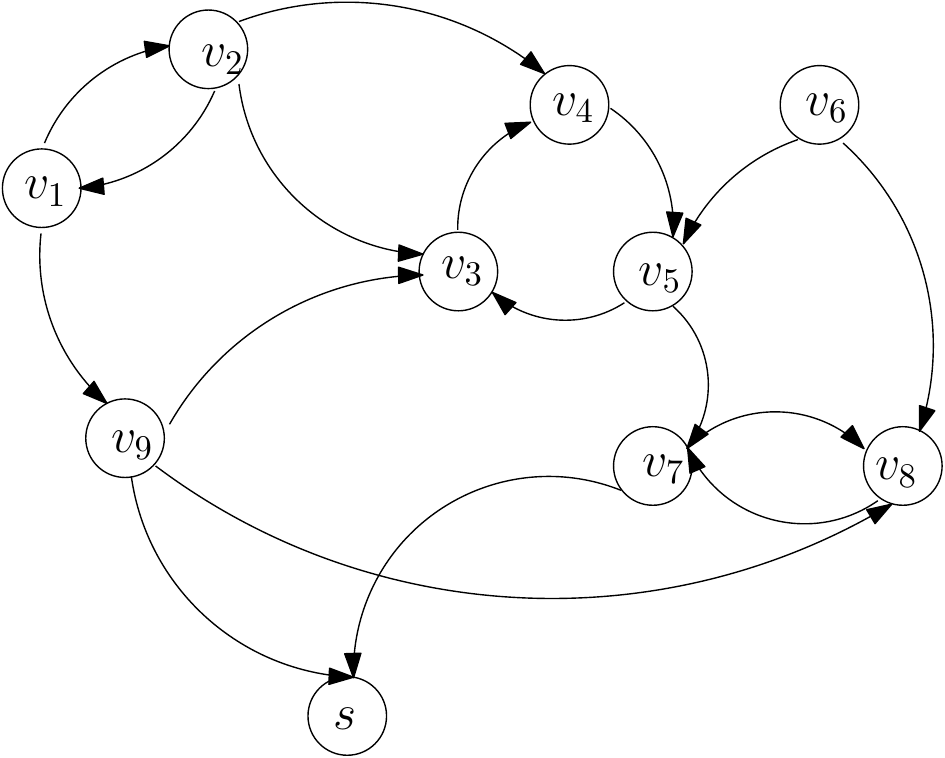}
\caption{Graph with many strongly connected components}
\end{center}
\end{figure} 
Let us consider an example for illustrating the algorithm. Figure \ref{F:enc} shows a digraph $G=(V,E)$ of $9$ normal vertices and a global sink $s$. $G$ has the decomposition into strongly connected components: $$V=\{v_1,v_2\}\sqcup \{v_3,v_4,v_5\}\sqcup \{v_6\}\sqcup \{v_7,v_8\}\sqcup \{v_9\}\sqcup\{s\}.$$
Components $\{v_1,v_2\}$ and $\{v_6\}$ are at level $0$; Component $\{v_9\}$ is at level $1$; Component $\{v_3,v_4,v_5\}$ is at level $2$ and component $\{v_7,v_8\}$ is at level $3$. By the strongly script algorithm above, we find strongly positive scripts for components at level $0$: the result script for $G[v_1,v_2]$ is $(1,1)$; for $G[v_6]$ is $(1)$; The update configuration for $G[v_9]$ by inversely firing in result scripts of components at level $0$ is $(-1)$ and so the $(1)$-script algorithm for $G[v_9]$ returns the script $(1)$. The update configuration on $G[v_3,v_4,v_5]$ by inversely firing in result scripts of components at level $0$ and $1$ is $(-2,-1,-1)$ and so the corresponding script is $(7,8,5)$. The update configuration on $G[v_7,v_8]$ is $(-5,-2)$ and the corresponding script is $(8,10)$ by $(5,2)$-script algorithm for $G[v_7,v_8]$. Hence, the minimum $G$-strongly positive script for digraph in Figure \ref{F:enc} is $(1,1,7,8,5,1,8,10,1)$. 

Next, we use the minimum $G$-strongly positive script to present a result which generalizes the result \cite{ADDL13} for undirected graphs. Moreover, instead of firing a single subset of vertices it fires a multi-subset of its vertices as well as a subset of a multi-set $\sigma^M$. This helps us to prove combinatorially the duality between critical and super-stable configuration which the firing a multi-subset is not allowed.
\begin{theorem}\label{T:script}
Let $\sigma^M$ be the min $G$-strongly positive script of $\CFG(G)$. The following statements are equivalent: 
\begin{itemize}
\item[i)] $a$ is critical;
\item[ii)] $a+\tau\Delta$ is not stable for all $\tau\succ \vec 0$;
\item[iii)] $a+\tau\Delta$ is not stable for all $\vec 0\prec \tau\preceq \sigma^M.$
\end{itemize}
\end{theorem}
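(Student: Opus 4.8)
The plan is to prove the cycle $(i)\Rightarrow(ii)\Rightarrow(iii)\Rightarrow(i)$, treating $a$ as a stable configuration throughout: this is the standing hypothesis, it is forced in $(i)$ since critical configurations are stable by definition, and it is exactly what licenses the use of Lemma~\ref{L:prec} and Theorem~\ref{P:cri}, both of which take a stable base configuration. The implication $(ii)\Rightarrow(iii)$ is immediate, since any $\tau$ with $\vec 0\prec\tau\preceq\sigma^M$ in particular satisfies $\tau\succ\vec 0$. Hence all the content sits in $(i)\Rightarrow(ii)$ and $(iii)\Rightarrow(i)$, and I would run both on a single idea: read one and the same non-negative configuration of the shape $a+(\text{a }G\text{-strongly positive script})\Delta$ in two different ways, and compare the unique firing script produced by its stabilization (the convergence lemma of \cite{LP01}).

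For $(i)\Rightarrow(ii)$, assume $a$ is critical and, for contradiction, that $b:=a+\tau\Delta$ is stable for some $\tau\succ\vec 0$. Since $\sigma^M\succeq(1,\dots,1)$ by the preceding proposition, I can pick an integer $k$ large enough that $k\sigma^M\succeq\tau$; then $k\sigma^M-\tau$ is a genuine (non-negative) $n$-script and
\[
a+k\sigma^M\Delta=b+(k\sigma^M-\tau)\Delta .
\]
Both sides are non-negative, so this configuration has a single well-defined firing script. On the left, $k\sigma^M$ is again $G$-strongly positive (scaling by $k>0$ preserves both the non-negativity of $k\sigma^M\Delta$ and its support), so Theorem~\ref{P:cri} forces that firing script to be exactly $k\sigma^M$. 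On the right, Lemma~\ref{L:prec} applied to the stable $b$ and the script $k\sigma^M-\tau$ forces the same firing script to be $\preceq k\sigma^M-\tau$. Comparing gives $k\sigma^M\preceq k\sigma^M-\tau$, i.e. $\tau\preceq\vec 0$, contradicting $\tau\succ\vec 0$. Thus $b$ cannot be stable.

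For $(iii)\Rightarrow(i)$, I would stabilize $a+\sigma^M\Delta$ (non-negative, as $a\succeq\vec 0$ and $\sigma^M\Delta\succeq\vec 0$) and call $\tau'$ its firing script, so that $(a+\sigma^M\Delta)^o=a+(\sigma^M-\tau')\Delta$. By Lemma~\ref{L:prec}, $\tau'\preceq\sigma^M$, so $\tau:=\sigma^M-\tau'$ satisfies $\vec 0\preceq\tau\preceq\sigma^M$ and $a+\tau\Delta=(a+\sigma^M\Delta)^o$ is stable. If $\tau'\neq\sigma^M$, then $\tau\succ\vec 0$ and the stability of $a+\tau\Delta$ contradicts $(iii)$; hence $\tau'=\sigma^M$, which means $(a+\sigma^M\Delta)^o=a$. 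As $\sigma^M$ is $G$-strongly positive, Theorem~\ref{P:cri} then yields that $a$ is critical, closing the cycle.

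The routine parts are the checks that every configuration I stabilize is non-negative, that $k\sigma^M$, $k\sigma^M-\tau$, and $\sigma^M-\tau'$ are honest $n$-scripts, and that $k\sigma^M$ remains $G$-strongly positive. The main obstacle—really the only step needing care—is the two-sided accounting of the unique firing script: one must simultaneously use the exact value $k\sigma^M$ guaranteed by criticality via Theorem~\ref{P:cri} and the upper bound coming from Lemma~\ref{L:prec} through the assumed stability of $b$, and these two facts collide precisely when $\tau\succ\vec 0$. Choosing the multiple $k$ so that $k\sigma^M$ dominates an arbitrary $\tau$ is exactly what allows $(i)\Rightarrow(ii)$ to cover \emph{all} non-empty multi-subsets rather than only those bounded by $\sigma^M$.
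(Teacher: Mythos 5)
Your proposal is correct and follows essentially the same route as the paper: the implication $(i)\Rightarrow(ii)$ scales $\sigma^M$ to dominate $\tau$ and plays the exact firing script from Theorem~\ref{P:cri} against the bound from Lemma~\ref{L:prec}, exactly as the paper does, and your $(iii)\Rightarrow(i)$ is just the direct (rather than contrapositive) phrasing of the paper's argument with the same ingredients. The only cosmetic differences are that you make the final contradiction in $(i)\Rightarrow(ii)$ explicit as $\tau\preceq\vec 0$, where the paper simply cites the conflict with Theorem~\ref{P:cri}.
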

\begin{proof}
We will show that $(i)\Rightarrow (ii)\Rightarrow (iii) \Rightarrow (i)$.
First, assume that $a$ is critical and there exists a script $\tau\succ \vec 0$ such
that $a+\tau\Delta$ is stable. Let $b=a+\tau\Delta$ and $m$ be a large enough integer such that $m> \max\{\tau_i: i=1,2,\dots,n\}$. Put $\sigma=m\sigma^M$. Since $\sigma^M\succeq (1,1,\dots,1)$, we have $\sigma\succ \tau$. On the other hand, since $\sigma^M$ is $G$-strongly positive, $\sigma$ is also $G$-strongly positive. By Theorem \ref{P:cri}, we have $(a+\sigma\Delta)^o=a$.
Therefore, 
$$a+\sigma\Delta=a+\tau\Delta + (\sigma-\tau)\Delta=b +(\sigma-\tau)\Delta.$$ By Lemma
\ref{L:prec} and due to the stability of $b$, the firing script in the stabilization process of $b+(\sigma-\tau)\Delta$ does not exceed $\sigma-\tau$ and so does the firing script in the stabilization process of $a+\sigma\Delta$. This conflicts the critical condition of $a$ by Theorem \ref{P:cri}. 

$(ii)\Rightarrow (iii)$: Straightforward.

$(iii)\Rightarrow (i)$: Suppose that $a$ is not critical. Let $b=(a+\sigma^M\Delta)^o$. We assume that $b\ne a$. Let $\tau$ be the script of a firing sequence in the stabilization process of $(a+\sigma^M\Delta)$. By Lemma \ref{L:prec}, we have $\tau\prec \sigma^M$ (since $a\ne b$) and $b=a+\sigma^M\Delta-\tau\Delta$. Let $\nu=\sigma^M-\tau$. We have $0\prec\nu\preceq \sigma^M$ and $a+\nu\Delta$ is stable which conflicts the hypothesis.
\end{proof}

Perrot and Trung \cite{PP14} raised a question that "do critical configurations have the maximum weights among stable configurations in their equivalent class?". They gave an affirmative answer for $\CFG$ on Eulerian graphs and also claimed that critical configurations are not the only ones in their equivalent classes having the maximum weight by an encounter example. We end this section by proving the claim for $\CFG$ on graphs with a global sink.
\begin{theorem} \label{T:max} Let $a$ be a critical configuration and $b$
be a stable configuration in the same equivalent class with $a$. Then $w(a)\geq w(b)$.
\end{theorem}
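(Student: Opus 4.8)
The plan is to connect the stable configuration $b$ to the critical configuration $a$ by a chain of inverse-firings along the minimum $G$-strongly positive script $\sigma^M$, to observe that the weight never decreases along this chain, and to show that the chain terminates at $a$. No direct estimate on the mixed-sign integer vector $(b-a)\Delta^{-1}$ will be needed.

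First I would record the monotonicity step. For a non-negative configuration $x$, set $\phi(x)=(x+\sigma^M\Delta)^o$. Since $\sigma^M$ is $G$-strongly positive it is in particular $G$-positive, so Lemma \ref{L:gre} gives $w(\phi(x))\geq w(x)$ whenever $x$ is non-negative. Because $\sigma^M\Delta\succeq\vec 0$, every $\phi^j(b)$ is stable, hence non-negative, and the abelian identity $(x^o+y)^o=(x+y)^o$ for $y\succeq\vec 0$ (already used in the proof of Theorem \ref{P:cri}) yields $\phi^k(b)=(b+k\sigma^M\Delta)^o$. Iterating Lemma \ref{L:gre} along $b,\phi(b),\dots,\phi^k(b)$ then produces the telescoping inequality
\[ w\big((b+k\sigma^M\Delta)^o\big)=w(\phi^k(b))\geq w(\phi^{k-1}(b))\geq\cdots\geq w(\phi^0(b))=w(b) \]
valid for every $k\geq 0$.

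The crux is to show that for $k$ large enough $(b+k\sigma^M\Delta)^o=a$. Here I would reuse the argument from the converse direction of Theorem \ref{P:cri}: because $\sigma^M$ is $G$-strongly positive, the support of $\sigma^M\Delta$ meets every non-sink strongly connected component, so for $k$ large one can fire $b+k\sigma^M\Delta$ up to a configuration of the form $a_{\max}+c$ with $c\succeq\vec 0$; hence $(b+k\sigma^M\Delta)^o=(a_{\max}+c)^o$ is critical by definition. Since $b+k\sigma^M\Delta\sim b\sim a$, this critical configuration lies in the equivalence class of $a$, and by Lemma \ref{L:HLM1} each class contains exactly one critical configuration, so it must equal $a$. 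Combining this with the telescoping inequality above gives $w(a)\geq w(b)$.

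The main obstacle is precisely this last step: making rigorous that a sufficiently large multiple of $\sigma^M\Delta$ permits firing the configuration above $a_{\max}$ in \emph{every} coordinate. This is exactly where $G$-strong positivity rather than mere $G$-positivity is essential — were $\sigma^M\Delta$ to vanish on some strongly connected component, that component could stay below $a_{\max}$ no matter how large $k$ is, the stabilization $(b+k\sigma^M\Delta)^o$ need not be critical, and the chain would not reach $a$. Once this is granted, uniqueness of the critical representative (Lemma \ref{L:HLM1}) closes the argument.
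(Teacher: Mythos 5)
Your proof is correct and rests on the same scaffolding as the paper's --- iterate the map $x\mapsto (x+\sigma\Delta)^o$ starting from $b$, use Lemma \ref{L:gre} to see weights never decrease along the orbit, locate a critical configuration in the orbit, and invoke uniqueness of the critical representative (Lemma \ref{L:HLM1}) to identify it with $a$ --- but the key step, showing the orbit actually hits a critical configuration, is handled genuinely differently. The paper never takes $k$ large: it applies the pigeonhole principle to the finitely many stable configurations to find $k<\ell$ with $b^{(k)}=b^{(\ell)}$, then uses Lemma \ref{L:prec} together with invertibility of $\Delta$ to force $(\ell-k)\sigma=\tau^{(k)}+\cdots+\tau^{(\ell-1)}$ with each $\tau^{(i)}\preceq\sigma$, hence $\tau^{(i)}=\sigma$ for all $i$; this makes $b^{(k)}$ a fixed point of the map whose stabilization script is exactly $\sigma$, and criticality follows from Theorem \ref{P:cri}. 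You instead argue that for $k$ large enough one can fire $b+k\sigma^M\Delta$ above $a_{\max}$, so that $(b+k\sigma^M\Delta)^o=(a_{\max}+c)^o$ is critical directly from the definition. That step is legitimate --- it is verbatim the claim the paper itself asserts (without detailed proof) in the converse direction of Theorem \ref{P:cri}, and you correctly isolate it as the crux and as the precise point where strong positivity, not mere positivity, is needed --- but it means your proof inherits the informality of that unquantified ``large $k$'' argument, whereas the paper's pigeonhole-plus-script-arithmetic route is exact and self-contained given Lemma \ref{L:prec} and Theorem \ref{P:cri}. In exchange, your argument is shorter, avoids the arithmetic on firing scripts entirely, and makes transparent why the orbit must terminate at the critical configuration of the class.
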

\begin{proof} Let $\sigma$ be a $G$-strongly positive script. We consider the sequence of stable configurations as follows:
$(b+\sigma\Delta)^o,(b+2\sigma\Delta)^o,(b+3\sigma\Delta)^o,\dots$. Since the number of stable configurations of $\CFG(G)$ is finite, by the pigeonhole principle there exist two indices $k$ and $l$ such that $k< \ell$ and 
$(b+k\sigma\Delta)^o=(b+\ell \sigma\Delta)^o$. Denote $b^{(i)}=(b+i\sigma\Delta)^o$. For each $i$,
we have
\begin{align*}
b^{(i+1)}&=(b+(i+1)\sigma\Delta)^o\\
&=(b+i\sigma\Delta+\sigma\Delta)^o\\
&=\big((b+i\sigma\Delta)^o+\sigma\Delta\big)^o \quad \text{ (since }
\sigma\Delta\succeq 0)\\
&=(b^{(i)}+\sigma\Delta)^o.
\end{align*}
So that if we let $\tau^{(i)}$ be the scripts in the stabilization of $b^{(i)}+\sigma\Delta$. However, by Lemma \ref{L:prec} we have $\tau^{(i)}\preceq\sigma$ and $$b^{(i+1)}=b^{(i)}+\sigma\Delta-\tau^{(i)}\Delta.$$
Generally, 
$$b^{(k)}=b^{(\ell)}=b^{(k)}+(\ell-k)\sigma\Delta-(\tau^{(k)}+\tau^{(k+1)}+\cdots+\tau^{(\ell-1)})\Delta.$$
Hence,
$$(\ell-k)\sigma\Delta-(\tau^{(k)}+\tau^{(k+1)}+\cdots+\tau^{(\ell-1)})\Delta=\vec 0.$$ Since
$\Delta$ is invertible, we have
$(\ell-k)\sigma=(\tau^{(k)}+\tau^{(k+1)}+\cdots+s^{(\ell-1)})$. However, $\tau^{(i)}\preceq \sigma$ for $i=k,k+1,\dots,\ell$ which implies
that $\tau^{(k)}=\tau^{(k+1)}=\cdots=\tau^{(\ell-1)}$ and so
$b^{(k)}=b^{(k+1)}=\cdots=b^{(\ell)}$. Particularly, the firing script in the stabilization process of $b^{(k)}+\sigma\Delta$ is exactly $\sigma$ which confirms that
$b^{(k)}$ is critical. Moreover, since $b^{(k)}\sim a$, $b^{(k)}=a$. By Lemma
\ref{L:gre}, $w(a)=w(b^{(k)})\geq w(b)$ which completes the proof.
\end{proof}
\section{The duality between critical and super-stable configurations}
As mentioned, this section presents a combinatorial proof for the duality of critical configurations with 
super-stable configurations by using Theorem \ref{T:script}. As a sequence, we give a deterministic way for the super-stabilization process of a given configuration by using the minimum script. 

We first recall that a configuration $a$ of $\CFG(G)$ is non-negative if $a\succeq 0$.
\begin{definition} Let $a$ be a non-negative configuration on $\CFG(G)$.
We say that $a$ is super-stable if for all script $\sigma\in (\mathbb{Z}_+)^n$
and $\sigma\succ 0$ we have $a-\sigma\Delta$ is not non-negative.
\end{definition}
It is readily that if $a$ is super-stable and $b\preceq a$, then $b$ is
super-stable too. We have the duality between critical and super-stable
configurations as follows:
\begin{theorem}[Duality Theorem]\label{T:dua}
Let $a$ be a stable configuration. Then $a$ is critical if and only if
$a_{\max}-a$ is super-stable.
\end{theorem}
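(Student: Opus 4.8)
The plan is to translate both sides of the equivalence into statements about inverse firing and then to exploit Theorem~\ref{T:script}. I would write $b = a_{\max}-a$; since $a$ is stable we have $\vec 0 \preceq a \preceq a_{\max}$, so $b$ is stable as well (in particular non-negative, so super-stability is meaningful). Unwinding the definition, $b$ is super-stable exactly when $a+\sigma\Delta$ has an active vertex (that is, $a+\sigma\Delta \not\preceq a_{\max}$) for every $\sigma \succ \vec 0$, because $(b-\sigma\Delta)_i<0$ means precisely $(a+\sigma\Delta)_i \geq d^+_i$. On the other hand, Theorem~\ref{T:script}(ii) says $a$ is critical exactly when $a+\tau\Delta$ is \emph{not stable} for every $\tau \succ \vec 0$. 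Since ``having an active vertex'' is stronger than ``not stable'' (the latter also permits a negative coordinate), the super-stable condition trivially implies the critical one; the whole content of the theorem lies in the reverse implication, where I must rule out that inverse firing a critical configuration yields a configuration sitting below $a_{\max}$ yet having a negative coordinate.

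\emph{Easy direction.} First I would record the direction ``$a_{\max}-a$ super-stable $\Rightarrow a$ critical'': for each $\tau \succ \vec 0$ the configuration $a+\tau\Delta$ exceeds $a_{\max}$ at some coordinate, hence is not stable, so Theorem~\ref{T:script} gives that $a$ is critical. This argument is self-contained and does not use the converse, so I may invoke it freely inside the proof of the other direction.

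\emph{Hard direction, via the contrapositive.} Assuming $b=a_{\max}-a$ is not super-stable, I would produce $\sigma'\succ\vec 0$ for which $a+\sigma'\Delta$ is critical, and deduce that $a$ itself is not critical. The device is to ``super-stabilize'' $b$ in one extremal step. Consider $T=\{\sigma\in(\mathbb Z_{\geq 0})^n : b-\sigma\Delta\succeq\vec 0\}$. Multiplying $b-\sigma\Delta\succeq\vec 0$ on the right by the non-negative matrix $\Delta^{-1}$ gives $b\Delta^{-1}-\sigma\succeq\vec 0$, i.e. $\sigma\preceq b\Delta^{-1}$, so $T$ is finite; by hypothesis it contains a nonzero element, so $\vec 0$ is not maximal in $T$. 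I would pick a maximal $\sigma'\in T$: then $\sigma'\succ\vec 0$, and $b':=b-\sigma'\Delta$ is super-stable, since any further legal inverse firing would contradict maximality. A super-stable configuration automatically satisfies $b'\preceq a_{\max}$ (if $b'_i\geq d^+_i$ one could inverse-fire $i$ alone and stay non-negative), so $x:=a_{\max}-b'=a+\sigma'\Delta$ is a genuine stable configuration. Applying the easy direction to $x$, whose companion $a_{\max}-x=b'$ is super-stable, shows $x$ is critical. Finally $x\sim a$ and $\sigma'\neq\vec 0$: were $a$ critical, Lemma~\ref{L:HLM1} would force $a=x=a+\sigma'\Delta$, hence $\sigma'\Delta=\vec 0$ and $\sigma'=\vec 0$ (as $\Delta$ is invertible), a contradiction. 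Thus $a$ is not critical.

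The step I expect to demand the most care is the super-stabilization, namely guaranteeing that a maximal legal inverse firing exists and lands on a super-stable configuration. The clean resolution is the finiteness of $T$ coming from $\Delta^{-1}\succeq 0$, which replaces an iterative and a priori possibly non-terminating procedure (inverse firings touching no edge to the sink preserve weight) by a single extremal choice. I would also verify the two small structural facts the reduction relies on, that super-stable configurations lie below $a_{\max}$ and that $b$ inherits stability from $a$, since the appeal to the easy direction hinges on $x=a_{\max}-b'$ being a bona fide stable configuration.
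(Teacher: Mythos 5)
Your proof is correct, but your hard direction takes a genuinely different route from the paper's. The easy direction (``$a_{\max}-a$ super-stable $\Rightarrow a$ critical'') is the same in both: the coordinate translation $(a_{\max}-a-\sigma\Delta)_i<0 \Leftrightarrow (a+\sigma\Delta)_i\geq d^+_i$ together with Theorem~\ref{T:script}(ii). For the converse, the paper argues by contradiction using \emph{forward} firing: given $\sigma\succ\vec 0$ with $a_{\max}-a-\sigma\Delta\succeq\vec 0$, it stabilizes that configuration with firing script $\tau$, rewrites the result as $a+(\sigma+\tau)\Delta=a_{\max}-(a_{\max}-a-\sigma\Delta)^o$, notes the right-hand side is stable, and contradicts Theorem~\ref{T:script}(ii) directly. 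You instead stay entirely inside inverse firing: you bound the set $T$ of legal inverse-firing scripts using $\Delta^{-1}\succeq 0$ (so $T$ is finite), take a maximal $\sigma'\in T$, show the resulting $b'=a_{\max}-a-\sigma'\Delta$ is super-stable and hence stable, and conclude via the easy direction applied to $x=a+\sigma'\Delta$ plus the uniqueness statement of Lemma~\ref{L:HLM1}. Both arguments are sound. The paper's is shorter and needs only stabilization and Theorem~\ref{T:script}; yours additionally needs $\Delta^{-1}\succeq 0$ and Lemma~\ref{L:HLM1}, but in exchange it avoids stabilization altogether and proves, as a byproduct, that any witness of non-super-stability can be pushed all the way to a genuine super-stabilization --- essentially the existence statement underlying the super-stabilization algorithm the paper describes after Corollary~\ref{C:dua}. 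One economy you missed: once you know $x=a+\sigma'\Delta$ is stable with $\sigma'\succ\vec 0$, the contrapositive of Theorem~\ref{T:script}(ii) already yields that $a$ is not critical, so the appeal to the easy direction and to Lemma~\ref{L:HLM1} is a (valid but unnecessary) detour.
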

\begin{proof}
Assume that $a$ is critical and $a_{\max}-a$ is not super-stable. By the
definition, there exists a script $\sigma\succ 0$ such that
$a_{\max}-a-\sigma\Delta$ is non-negative. Let
$b=(a_{\max}-a-\sigma\Delta)^o$ and $\tau$ be the firing script in the stabilization process of $a_{\max}-a-\sigma\Delta$.
We have
\begin{align*}
&b=a_{\max}-a-\sigma\Delta-\tau\Delta\\
\Leftrightarrow \quad & a+(\sigma+\tau)\Delta=a_{\max}-b.
\end{align*}
Since $b$ is stable, $a_{\max}-b$ is stable. Therefore, $a+(\sigma+\tau)\Delta$
is stable which conflicts Theorem \ref{T:script}(ii).

Conversely, assume that $a$ is super-stable. We prove that $a_{\max}-a$ is
critical. On the contrary, by Theorem \ref{T:script}(ii) there exists an $n$-script
$\sigma$ such that $a_{\max}-a+\sigma\Delta$ is stable. Put
$b=a_{\max}-a+\sigma\Delta$. We have $a-\sigma\Delta=a_{\max}-b$ which is a
non-negative configuration (since $b$ is stable). By the definition, $a$ is not
super-stable. This completes the proof.
\end{proof}

Notice that the duality between critical and super-stable configurations on undirected as well as Eulerian graphs is mentioned in \cite{GK14,HLMPPW08}. On digraphs with a global sink, it is proved in \cite{PPW11}. However, the proofs are based on the third intermediate object \emph{sandpile group}. They all show that the number of critical configurations and the number of super-stable configurations are all equal to the order of sandpile group by using toppling ideal. The proof presented above seems more natural and quite direct from the definition of super-stable configurations. 
 
The following corollary allows us to reduce the checking definition of a super-stable configuration. 
\begin{corollary}\label{C:dua}
Let $a$ be a stable configuration and $\sigma^M$ be the minimum $G$-strongly positive script. The following statements are equivalent:
\begin{itemize}
\item[i)] $a$ is super-stable;
\item[ii)] $a-\sigma\Delta$ is not non-negative for all $0\prec\sigma \leq \sigma^M$;
\item[iii)] $a-\sigma\Delta$ is not stable for all $0\prec \sigma\preceq \sigma^M$.
\end{itemize}
\end{corollary}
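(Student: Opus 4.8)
The plan is to deduce this corollary from the Duality Theorem (Theorem~\ref{T:dua}) together with the sharpened characterization of criticality in Theorem~\ref{T:script}, transporting both across the order-reversing involution $x\mapsto a_{\max}-x$ on stable configurations. First I would record the two easy implications. The implication $(i)\Rightarrow(ii)$ is immediate: super-stability asks that $a-\sigma\Delta$ fail to be non-negative for \emph{every} $\sigma\succ\vec 0$, and $(ii)$ merely restricts this universal statement to the sub-range $\vec 0\prec\sigma\preceq\sigma^M$. The implication $(ii)\Rightarrow(iii)$ is equally immediate, since for each fixed $\sigma$ a configuration that is not non-negative is \emph{a fortiori} not stable. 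Thus the only substantial step is $(iii)\Rightarrow(i)$.

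For the main step I would first isolate the elementary fact that, for any $x\in\mathbb Z^n$, the configuration $a_{\max}-x$ is stable if and only if $x$ is stable, because the inequalities $0\le x_i\le d^+_i-1$ are equivalent to $0\le(d^+_i-1)-x_i\le d^+_i-1$; this is exactly the observation already used in the proof of Theorem~\ref{T:dua}. Next I would invoke the Duality Theorem in its symmetric form: applying Theorem~\ref{T:dua} to the stable configuration $a_{\max}-a$ (which is stable whenever $a$ is) shows that $a$ is super-stable if and only if $a_{\max}-a$ is critical. Hence it suffices to prove that hypothesis $(iii)$ forces $a_{\max}-a$ to be critical.

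To establish this, I would set $c=a_{\max}-a$ and apply the equivalence $(i)\Leftrightarrow(iii)$ of Theorem~\ref{T:script} to $c$: the configuration $c$ is critical precisely when $c+\tau\Delta$ fails to be stable for every $\vec 0\prec\tau\preceq\sigma^M$. Now I rewrite $c+\tau\Delta=a_{\max}-(a-\tau\Delta)$, so by the involution remark $c+\tau\Delta$ is stable if and only if $a-\tau\Delta$ is stable. Hypothesis $(iii)$ says exactly that $a-\tau\Delta$ is not stable for all $\vec 0\prec\tau\preceq\sigma^M$; therefore $c+\tau\Delta$ is not stable on this whole range, and Theorem~\ref{T:script} yields that $c=a_{\max}-a$ is critical. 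Duality then gives that $a$ is super-stable, closing the cycle $(i)\Rightarrow(ii)\Rightarrow(iii)\Rightarrow(i)$.

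I do not expect a genuine obstacle here: the content lies entirely in correctly transporting the two previous theorems across the involution $x\mapsto a_{\max}-x$, and the only point requiring a little care is keeping track of the distinction between ``not non-negative'' in $(ii)$ and ``not stable'' in $(iii)$, which is precisely what makes $(ii)\Rightarrow(iii)$ (rather than an equivalence) the natural direction to record while the genuine reverse content is carried by the chain back to $(i)$.
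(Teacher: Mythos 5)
Your proposal is correct and follows the same route as the paper, whose proof of $(iii)\Rightarrow(i)$ is simply the remark ``straightforward from Theorems \ref{T:script}(iii) and \ref{T:dua}''; you have merely filled in the details of transporting Theorem \ref{T:script} across the involution $x\mapsto a_{\max}-x$, exactly as intended. The two easy implications are also handled identically, so there is nothing to add.
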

\begin{proof} 
(i) $\Rightarrow$ (ii): By the definition of super-stable configuration;

(ii) $\Rightarrow$ (iii): Straightforward;

(iii) $\Rightarrow$ (i): Straightforward from Theorems \ref{T:script}(iii) and \ref{T:dua}.
\end{proof}

The next corollary shows us the minimum of weight of super-stable
configurations in its equivalent class.
\begin{corollary} Each equivalent class of $\mathcal{SP}(G)$ contains exactly one super-stable configuration. Furthermore, if $a$ is a super-stable configuration, then $w(a)\leq w(b)$ for all $b$ such
that $b\sim a$.
\end{corollary}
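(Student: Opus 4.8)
The plan is to deduce both assertions from the duality already established in Theorem \ref{T:dua}, combined with the corresponding facts for critical configurations, namely Lemma \ref{L:HLM1} (one critical configuration per class) and Theorem \ref{T:max} (critical configurations maximize weight among stable ones). The unifying device is the involution $\phi(x)=a_{\max}-x$ on $\mathbb Z^n$. I would first record its two basic properties: since $\phi(\phi(x))=x$ it is an involution, and since $\phi(x)-\phi(y)=y-x$ it sends linearly equivalent elements to linearly equivalent ones, hence descends to an involution $\bar\phi$ on the set of cosets of $\SG(G)$. By Theorem \ref{T:dua}, $x$ is critical if and only if $\phi(x)$ is super-stable, so $\phi$ restricts to a bijection between critical and super-stable configurations. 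I would also remark at the outset that every super-stable configuration is in fact stable: taking the unit script at coordinate $i$ in the definition forces $a_i\le d^+_i-1$, which together with non-negativity gives stability; this is what will let Theorem \ref{T:max} apply.

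For the existence-and-uniqueness assertion, fix an equivalence class $C$ and set $C'=\bar\phi(C)$. By Lemma \ref{L:HLM1}, $C'$ contains exactly one critical configuration $c$. Then $\phi(c)$ is super-stable, and since $\bar\phi$ is an involution, $\phi(c)\in\bar\phi(C')=C$; this proves existence. For uniqueness, if $s_1,s_2\in C$ are both super-stable, then $\phi(s_1),\phi(s_2)\in C'$ are both critical, hence equal by Lemma \ref{L:HLM1}, and injectivity of $\phi$ forces $s_1=s_2$.

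For the weight assertion, let $a$ be super-stable, so that $c:=a_{\max}-a$ is critical by Theorem \ref{T:dua} applied to $c$. I would first treat a stable $b$ with $b\sim a$. The inequalities $0\le b_i\le d^+_i-1$ are symmetric under $b\mapsto a_{\max}-b$, so $a_{\max}-b$ is again stable, and $a_{\max}-b\sim c$; hence Theorem \ref{T:max} gives $w(a_{\max}-b)\le w(c)$. Using $w(a_{\max}-x)=w(a_{\max})-w(x)$, this rearranges to $w(a)\le w(b)$. For a general non-negative $b\sim a$, stabilizing only removes chips into the sink and therefore cannot increase the weight, so $w(b)\ge w(b^o)$ with $b^o$ stable and $b^o\sim a$, which reduces to the stable case just handled.

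The step I expect to be the main point to get right is the triple compatibility of the map $\phi$: it must simultaneously interchange critical and super-stable configurations, preserve linear-equivalence classes up to the involution $\bar\phi$, and preserve stability. Once these three are in place, the minimum-weight statement for super-stable configurations is literally the maximum-weight statement (Theorem \ref{T:max}) read through $\phi$, and the only genuine work is verifying the stability-preservation and the affine identity for $w$. I would also flag, for honesty, that the clean inequality is the natural one for $b$ ranging over the non-negative (physical) configurations in the class, the stable case being the core and the non-negative case following by stabilization; over arbitrary integer-valued $b\sim a$ the weight is unbounded below, so the intended comparison is with non-negative representatives.
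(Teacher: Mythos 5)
Your proof is correct and takes essentially the same route as the paper: reduce to stable $b$ via stabilization, note that $x\mapsto a_{\max}-x$ respects linear equivalence, and then read the conclusion off from Theorem \ref{T:max} and the duality Theorem \ref{T:dua} (with Lemma \ref{L:HLM1} giving existence and uniqueness). Your extra checks---that super-stability implies stability (so Theorem \ref{T:dua} applies) and that the weight comparison must be restricted to non-negative representatives---are worthwhile details the paper leaves implicit.
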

\begin{proof}
Since $b\sim b^o$ and $w(b)\geq w(b^o)$, we can assume that $b$ is stable. It
is readily that if $a\sim b$, then $a_{\max}-a\sim a_{\max}-b$. Therefore, the
conclusion of the statement is a consequence of Theorems \ref{T:max} and
\ref{T:dua}.
\end{proof}

Concerning to the rank of non-negative configurations on complete graphs, investigations by Cori and Le Borgne \cite{CL13} lead to the computing of the super-stable configuration equivalent to a given configuration, which is also called the super-stabilization of a configuration. A natural way is using the duality between critical and super-stable configuration. Precisely, if we denote $ss(a)$ (resp. $crit(a)$) the super-stable (resp. critical) configuration equivalent to a stable configuration $a$, then $$ss(a)=a_{\max}-crit(a_{\max}-a).$$ The rest is applying Lemma \ref{L:HLM2}(iii) for computing $crit(a_{\max}-a)$. However, we could do it directly and deterministically by using $\sigma^M$ as follows: 
\begin{itemize}
\item[i)] Calculate $a^o$;
\item[ii)] Calculate $a^o-\sigma^M\Delta$;
\item[iii)] We add rows $\Delta_i$ of $\Delta$ satisfying $(a^o-\sigma^M\Delta)_i<0$ to $a^o-\sigma^M\Delta$. 
\item[iv)] Repeat step (iii) until we obtain a non-negative configuration which is the super-stable configuration in the equivalent class of $a$.
\end{itemize}
This could be considered as a deterministic way which is a generalization of the way Cori and Le Borgne have done on complete graph in \cite{CL13}. 

Now we present an example to illustrate the results.

Let $G$ be a graph with $V=\{v_1,v_2,v_3,s\}$ and
$$E=\{(v_1,v_2)^3,(v_1,v_3)^2,(v_2,v_3),(v_2,s),(v_3,v_1),(v_3,v_2)\}.$$ Then $$\Delta=\left(\begin{matrix}
5&-3&-2\\
0&2&-1\\
-1&-1&2
\end{matrix}\right); \quad\quad \sigma^M=(1,3,3);\quad\quad \sigma^M\Delta=(2,0,1).$$
$G$ has $8$ critical configurations and also $8$ super-stable configurations,  
denoted by $Crit(G)$ and $SS(G)$ respectively, as follows:
\begin{align*}
Crit(G)&=\{(4,1,1), (4,1,0), (4,0,1), (3,1,1), (3,1,0), (3,0,1), (2,1,1), (2,0,1)\};\\
SS(G)&=\{(0,0,0), (0,0,1), (0,1,0), (1,0,0), (1,0,1), (1,1,0), (2,0,0), (2,1,0)\};
\end{align*}
To find the super-stable configuration in the equivalent class of $(4,0,0)$ for instance, we do as follows: 
Consider configuration $(4,0,0)-\sigma^M\Delta$, which equals $(2,0,-1)$. Since its third component is negative, we add the $3$th row $(-1,-1,2)$ of $\Delta$ to $(2,0,-1)$ and get $(1,-1,1)$. Since the second component of $(1,-1,1)$ is negative, we add $\Delta_2=(0,2,-1)$ to it and get $(1,1,0)$ of non-negative components. Hence, $(1,1,0)$ is the super-stable configuration equivalent to $(4,0,0)$. Similarly, we could find equivalent classes on the set of stable configurations of $G$ are:
\begin{align*}
(4,1,1)&\sim (2,1,0)\\
(4,1,0)&\sim (1,0,1)\\
(4,0,1)&\sim (2,0,0)\sim (1,1,1)\\
(3,1,1)&\sim (1,1,0)\sim (4,0,0)\\
(3,1,0)&\sim (0,0,1)\\
(3,0,1)&\sim (1,0,0)\sim (0,1,1)\\
(2,1,1)&\sim (0,1,0)\\
(2,0,1)&\sim (0,0,0).
\end{align*}
We also notice that maximal super-stable configurations on digraphs do not have the same weight. For instance, super-stable configurations $(2,1,0)$ and $(1,0,1)$ shown above. 
\bibliographystyle{plain}
\bibliography{biblio}
\end{document}